\newcommand{\arxiv}[1]{\href{http://arxiv.org/abs/#1}{{\tiny\tt arXiv:#1}}}
\newcommand{\DOI}[1]{\href{http://doi.org/#1}{\color{purple}{\tiny\tt DOI:#1}}}
\numberwithin{equation}{section}
\newtheorem{theorem}[equation]{Theorem}
\newtheorem{proposition}[equation]{Proposition}
\newtheorem{lemma}[equation]{Lemma}
\newtheorem{corollary}[equation]{Corollary}
\newtheorem{question}[equation]{Question}
\theoremstyle{definition}
\newtheorem{rmk}[equation]{Remark}
\newenvironment{remark}[1][]{\begin{rmk}[#1] \pushQED{\qed}}{\popQED \end{rmk}}
\newtheorem{eg}[equation]{Example}
\newenvironment{example}[1][]{\begin{eg}[#1] \pushQED{\qed}}{\popQED \end{eg}}
\newtheorem{defnaux}[equation]{Definition}
\newenvironment{definition}[1][]{\begin{defnaux}[#1]\pushQED{\qed}}{\popQED \end{defnaux}}
\newcommand{\bC}{\mathbf{C}}
\newcommand{\cC}{\mathcal{C}}
\newcommand{\fC}{\mathfrak{C}}
\newcommand{\cD}{\mathcal{D}}
\newcommand{\bF}{\mathbf{F}}
\newcommand{\cI}{\mathcal{I}}
\newcommand{\cM}{\mathcal{M}}
\newcommand{\bN}{\mathbf{N}}
\newcommand{\bQ}{\mathbf{Q}}
\newcommand{\fS}{\mathfrak{S}}
\newcommand{\defn}[1]{\textit{#1}}
\newcommand{\FI}{\mathbf{FI}}
\newcommand{\OI}{\mathbf{OI}}
\DeclareMathOperator{\init}{in}
\DeclareMathOperator{\Aut}{Aut}
\DeclareMathOperator{\im}{im}
\DeclareMathOperator{\Spec}{Spec}
\newcommand{\GL}{\mathbf{GL}}
\author{Robert P. Laudone}
\address{Department of Mathematics, University of Michigan, Ann Arbor, MI}
\email{\href{mailto:laudone@umich.edu}{laudone@umich.edu}}
\urladdr{\url{https://robertplaudone.github.io/}}
\thanks{RL was supported by NSF grant DMS-2001992}
\author{Andrew Snowden}
\address{Department of Mathematics, University of Michigan, Ann Arbor, MI}
\email{\href{mailto:asnowden@umich.edu}{asnowden@umich.edu}}
\urladdr{\url{http://www-personal.umich.edu/~asnowden/}}
\title[Systems of ideals parametrized by combinatorial structures]{Systems of ideals parametrized by\\ combinatorial structures}
\date{April 7, 2023}
\begin{document}

\begin{abstract}
A \defn{symmetric chain of ideals} is a rule that assigns to each finite set $S$ an ideal $I_S$ in the polynomial ring $\bC[x_i]_{i \in S}$ such that if $\phi \colon S \to T$ is an embedding of finite sets then the induced homomorphism $\phi_*$ maps $I_S$ into $I_T$. Cohen proved a fundamental noetherian result for such chains, which has seen intense interest in recent years due to a wide array of new applications. In this paper, we consider similar chains of ideals, but where finite sets are replaced by more complicated combinatorial objects, such as trees. We give a general criterion for a Cohen-like theorem, and give several specific examples where our criterion holds. We also prove similar results for certain limiting situations, where a permutation group acts on an infinite variable polynomial ring. This connects to topics in model theory, such as Fra\"iss\'e limits and oligomorphic groups.
\end{abstract}


\maketitle
\tableofcontents



\section{Introduction}

\subsection{Background}

Suppose that for each non-negative integer $n$ we have a Zariski closed subset $Z_n$ of $\bC^n$ satisfying the following condition: if $(x_1, \ldots, x_n)$ belongs to $Z_n$ and $i_1, \ldots, i_m$ are distinct indices, then $(x_{i_1}, \ldots, x_{i_m})$ belongs to $Z_m$. This means that whenever we sample $m$ distinct coordinates from a point in $Z_n$, the result belongs to $Z_m$. Taking $m=n$, we see that $Z_n$ is stable under the symmetric group $\fS_n$, which acts by permuting coordinates; we therefore refer to $Z_{\bullet}$ as a \defn{symmetric family} of varieties. See Example~\ref{ex:symmetric} for a typical example.

In the past 15 years, symmetric families\footnote{In many of these references, a slightly more general version of symmetric families is used; see \S \ref{ss:exsys}(c).} have found numerous applications: in algebraic statistics \cite{BD, HillarCampo, HillarSullivant, MarajNagel}, algebraic geometry \cite{DraismaNotes, DraismaEggermont, DraismaKuttler}, topology \cite{Ramos}, and even chemistry \cite{AschenbrennerHillar}. Additionally, there has been much work studying abstract properties of symmetric families \cite{KLS, LNNR, LNNR2, NagelRomer1, NagelRomer2, semilin, svar, sideals}; we note in particular that \cite{svar} classifies symmetric families of varieties. This is by no means a complete bibliography; see the surveys \cite{JKVLR} and \cite{DraismaNotes} for more details.

A \defn{symmetric chain} of ideals consists of an ideal $I_n$ of $\bC[x_1, \ldots, x_n]$ for each non-negative integer $n$ such that $I_n$ is $\fS_n$-stable, and $I_n \subset I_{n+1}$. If $Z_{\bullet}$ is a symmetric family of varieties then $\{I(Z_n)\}_{n \ge 0}$ is a symmetric chain of ideals. Cohen \cite{cohen} proved the following foundational result in this context\footnote{In fact, Cohen formulated his result in a slightly different language; see \S \ref{ss:intro-eq}. The formulation given here seems to be due to \cite{NagelRomer1}.}: if $I_{\bullet}$ is a symmetric chain of ideals, then there exists a non-negative integer $m$ and a finite collection of polynomials $f_1, \ldots, f_r \in \bC[x_1, \ldots, x_m]$ such that $I_n$ is generated by the $\fS_n$-orbits of $f_1, \ldots, f_r$ for all $n \ge m$. This finiteness property underlies all of the work referenced in the previous paragraph.

The theory of symmetric chains stems from the marriage of commutative algebra and the combinatorics of finite sets. In this paper, we wed commutative algebra with more complicated combinatorial structures. Our main theorem is an analog of Cohen's result in these more general settings.

\begin{example} \label{ex:symmetric}
Fix $r \ge 1$. Define $I_n \subset \bC[x_1, \ldots, x_n]$ to be the ideal generated by the discriminants of all $r+1$ element subsets of $\{x_1, \ldots, x_n\}$. Equivalently, for $n \ge r+1$, $I_n$ is the ideal generated by the $\fS_n$-orbit of
\begin{displaymath}
\Delta(x_1, \ldots, x_{r+1}) = \prod_{1 \le i < j \le r+1} (x_i-x_j),
\end{displaymath}
and $I_n=0$ for $n\le r$, by convention. Let $Z_n=V(I_n)$ be the associated variety. Then $I_{\bullet}$ is a symmetric chain of ideals, and $Z_{\bullet}$ is a symmetric family of varieties.

The variety $Z_n$ consists of all tuples $(a_1, \ldots, a_n)$ of complex numbers that contain at most $r$ distinct values. In other words, a point of $\bC^n$ belongs to $Z_n$ if whenever we sample $r+1$ distinct coordinates at least two are equal. Geometrically, $Z_n$ is a subspace arrangement: it is a union of linear subspaces of $\bC^n$ of dimension $r$. In particular, we see that the Krull dimension of $Z_n$ is not increasing with $n$, which is perhaps the first hint that a finiteness result like Cohen's might hold.
\end{example}

\subsection{Boron trees}

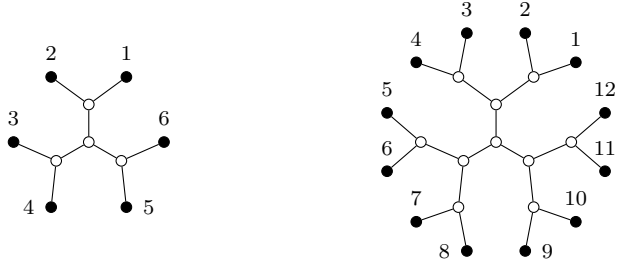
\begin{figure}
\begin{displaymath}
\begin{tikzpicture}[baseline=0]
\tikzstyle{boron}=[circle, draw, inner sep=0pt, minimum width=4pt]
\tikzstyle{vertex}=[circle, draw, fill=black, inner sep=0pt, minimum width=4pt]
\tikzstyle{edge}=[draw]
\node[boron] (a0) at (0,0) {};
\node[boron] (a1-1) at (90:.5) {};
\node[boron] (a1-2) at (210:.5) {};
\node[boron] (a1-3) at (330:.5) {};
\node[vertex,label={\tiny 1}] (a2-1) at (60:1) {};
\node[vertex,label={\tiny 2}] (a2-2) at (120:1) {};
\node[vertex,label={\tiny 3}] (a2-3) at (180:1) {};
\node[vertex,label=left:{\tiny 4}] (a2-4) at (240:1) {};
\node[vertex,label=right:{\tiny 5}] (a2-5) at (300:1) {};
\node[vertex,label={\tiny 6}] (a2-6) at (0:1) {};
\draw[edge] (a0) -- (a1-1);
\draw[edge] (a0) -- (a1-2);
\draw[edge] (a0) -- (a1-3);
\draw[edge] (a1-1) -- (a2-1);
\draw[edge] (a1-1) -- (a2-2);
\draw[edge] (a1-2) -- (a2-3);
\draw[edge] (a1-2) -- (a2-4);
\draw[edge] (a1-3) -- (a2-5);
\draw[edge] (a1-3) -- (a2-6);
\end{tikzpicture}
\hskip 1in
\begin{tikzpicture}[baseline=0]
\tikzstyle{boron}=[circle, draw, inner sep=0pt, minimum width=4pt]
\tikzstyle{vertex}=[circle, draw, fill=black, inner sep=0pt, minimum width=4pt]
\tikzstyle{edge}=[draw]
\node[boron] (a0) at (0,0) {};
\node[boron] (a1-1) at (90:.5) {};
\node[boron] (a1-2) at (210:.5) {};
\node[boron] (a1-3) at (330:.5) {};
\node[boron] (a2-1) at (60:1) {};
\node[boron] (a2-2) at (120:1) {};
\node[boron] (a2-3) at (180:1) {};
\node[boron] (a2-4) at (240:1) {};
\node[boron] (a2-5) at (300:1) {};
\node[boron] (a2-6) at (0:1) {};
\node[vertex,label={\tiny 1}] (a3-1) at (45:1.5) {};
\node[vertex,label={\tiny 2}] (a3-2) at (75:1.5) {};
\node[vertex,label={\tiny 3}] (a3-3) at (105:1.5) {};
\node[vertex,label={\tiny 4}] (a3-4) at (135:1.5) {};
\node[vertex,label={\tiny 5}] (a3-5) at (165:1.5) {};
\node[vertex,label={\tiny 6}] (a3-6) at (195:1.5) {};
\node[vertex,label={\tiny 7}] (a3-7) at (225:1.5) {};
\node[vertex,label=left:{\tiny 8}] (a3-8) at (255:1.5) {};
\node[vertex,label=right:{\tiny 9}] (a3-9) at (285:1.5) {};
\node[vertex,label={\tiny 10}] (a3-10) at (315:1.5) {};
\node[vertex,label={\tiny 11}] (a3-11) at (345:1.5) {};
\node[vertex,label={\tiny 12}] (a3-12) at (15:1.5) {};
\draw[edge] (a0) -- (a1-1);
\draw[edge] (a0) -- (a1-2);
\draw[edge] (a0) -- (a1-3);
\draw[edge] (a1-1) -- (a2-1);
\draw[edge] (a1-1) -- (a2-2);
\draw[edge] (a1-2) -- (a2-3);
\draw[edge] (a1-2) -- (a2-4);
\draw[edge] (a1-3) -- (a2-5);
\draw[edge] (a1-3) -- (a2-6);
\draw[edge] (a2-1) -- (a3-1);
\draw[edge] (a2-1) -- (a3-2);
\draw[edge] (a2-2) -- (a3-3);
\draw[edge] (a2-2) -- (a3-4);
\draw[edge] (a2-3) -- (a3-5);
\draw[edge] (a2-3) -- (a3-6);
\draw[edge] (a2-4) -- (a3-7);
\draw[edge] (a2-4) -- (a3-8);
\draw[edge] (a2-5) -- (a3-9);
\draw[edge] (a2-5) -- (a3-10);
\draw[edge] (a2-6) -- (a3-11);
\draw[edge] (a2-6) -- (a3-12);
\end{tikzpicture}
\end{displaymath}
\label{fig:boron}
\caption{Two boron trees. The boron atoms are white and the hydrogen atoms are black.}
\end{figure}

In a symmetric family of varieties, whenever one samples $m$ coordinates of a point in $Z_n$, the result belongs to $Z_m$. What happens if one constrains which $m$-tuples are allowed to be sampled? For example, one might only allow $m$-tuples that appear in order (see \S \ref{ss:exsys}(a)). We now discuss a more complicated variant, and state our results in this case.

A \defn{boron tree} is a tree $T$ in which the internal vertices (the ``boron atoms'') have valence three. See Figure~\ref{fig:boron} for an illustration. We write $T_H$ for the set of leaves (the ``hydrogen atoms''). There is an induced boron tree structure on any subset of $T_H$, and there is a notion of embeding for boron trees; see \S \ref{s:boron} for details.

Given a boron tree $T$, let $R_T$ be the polynomial ring $\bC[x_i]_{i \in T_H}$ with variables corresponding to the hydrogen atoms. If $\phi \colon T \to T'$ is an embedding of boron trees, there is an induced map $\phi_* \colon R_T \to R_{T'}$ given by $\phi_*(x_i)=x_{\phi(i)}$. We define a \defn{boric chain} to be a rule assigning to each boron tree $T$ an ideal $I_T$ in $R_T$ such that for any embedding $\phi \colon I_T \to I_{T'}$ we have $\phi_*(I_T) \subset I_{T'}$. This is the analog of a symmetric chain when the combinatorics of finite sets are replaced with the combinatorics of boron trees.

Our main theorem yields the following result in this particular case:

\begin{theorem} \label{thm:intro-boron}
Let $I_{\bullet}$ be a boric chain. Then there exist finitely many boron trees $T_1, \ldots, T_r$ and polynomials $f_i \in R_{T_i}$ such that for any boron tree $T$ the ideal $I_T$ is generated by the elements $\phi_*(f_i)$ as $i$ varies in $\{1,\ldots,r\}$ and $\phi$ varies over embeddings $T_i \to T$.
\end{theorem}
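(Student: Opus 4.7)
The plan is to follow the template of Cohen's original argument, replacing Higman's lemma on sequences with Kruskal's tree theorem. The proof has two stages: a Gr\"obner-style reduction from arbitrary boric chains to monomial ones, and a well-partial-order result for ``boron monomials''.

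For the first stage, equip each $R_T$ with a monomial order, for instance graded lexicographic relative to some auxiliary labeling of $T_H$. Although embeddings $\phi\colon T \to T'$ need not literally preserve these orders, the key point is that $\phi_*$ carries monomials to monomials, so if $I_\bullet$ is a boric chain then $\init(I_\bullet)$ is again a boric chain. A standard Gr\"obner-lifting argument then shows that generators of $\init(I_\bullet)$, in the sense of the theorem, lift to generators of $I_\bullet$; this reduces the statement to the case of monomial boric chains.

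For the second stage, let $P$ be the set of pairs $(T,m)$ where $T$ is a boron tree and $m \in R_T$ is a monomial, and declare $(T,m) \le (T',m')$ if there exists a boron-tree embedding $\phi\colon T \to T'$ with $\phi_*(m) \mid m'$. A monomial boric chain $I_\bullet$ gives rise to an upper set $\cU \subset P$, namely the set of $(T,m)$ with $m \in I_T$, and the conclusion of the theorem for $I_\bullet$ is precisely that $\cU$ has finitely many minimal elements. Thus it suffices to show that $P$ is a well-partial-order.

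The wpo claim is the main obstacle, and is where the specific combinatorics of boron trees enters. A boron monomial $(T,m)$ is naturally a boron tree whose hydrogen atoms carry non-negative integer labels (the exponents of $m$), and the order on $P$ is exactly tree embedding combined with the product order on $(\bN,\le)$-valued leaf labels. A labeled version of Kruskal's tree theorem then supplies the required wpo, once one reconciles the definition of boron-tree embedding (given via induced structures on leaf subsets) with the standard notion of topological (homeomorphic) embedding of trees --- the key observation being that passing to the induced boron tree on a leaf subset amounts to contracting the degree-two vertices that remain after restriction. With this dictionary in hand, Kruskal's theorem delivers the wpo and completes the proof.
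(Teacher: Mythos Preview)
Your first stage contains a genuine gap: the claim that $\init(I_\bullet)$ is again a boric chain is false, and the ``$\phi_*$ carries monomials to monomials'' observation does not rescue it. What you need is $\phi_*(\init(I_T)) \subset \init(I_{T'})$, i.e.\ that $\phi_*$ commutes with taking initial terms; this requires the map $\vert\phi\vert$ on leaf sets to be order-preserving. Boron trees have nontrivial automorphisms (the three-leaf tree already has automorphism group $\fS_3$), so no choice of total order on each $T_H$ can be preserved by all embeddings. Concretely, the same objection kills your argument already for $\FI$: with $I_{\{1,2\}}=(x_1-x_2)$ and lex order $x_1>x_2$ one gets $\init(I_{\{1,2\}})=(x_1)$, and the transposition sends $x_1$ to $x_2\notin(x_1)$. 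The Gr\"obner-lifting step fails for the same reason: if $\init(f_i)=m_i$ and $\phi$ is not order-preserving, then $\init(\phi_*(f_i))$ need not equal $\phi_*(m_i)$, so the sub-chain generated by the $f_i$ need not have the same initial chain.

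The paper's fix is exactly to manufacture order-compatibility by passing to an auxiliary category $\cD$ of \emph{ordered} (rooted planar) boron trees, whose morphisms are by definition order-preserving on leaves. There the Gr\"obner reduction goes through verbatim (this is Proposition~\ref{prop:init}), and noetherianity of $R(\cD)$ is then transferred back to $R(\cC)$ via the essentially surjective forgetful functor $\cD\to\cC$ (Proposition~\ref{prop:F}); this is the QG-category mechanism. Your second stage is in good shape: the paper proves the corresponding wqo statement for $\cM(\cD)$ by a Nash--Williams minimal-bad-sequence argument (Lemma~\ref{lem:orderedBoronWQO}), which is a close cousin of the labeled Kruskal argument you sketch. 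But the wqo result alone does not suffice; you must first move to a category where embeddings respect a monomial order before any initial-ideal argument can be run.
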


Boric chains are much more complicated than symmetric chains. The following example gives some indication of this.

\begin{example}
Let $T_0$ be the boron tree from Figure~\ref{fig:boron} with six leaves, and let $\delta \in R_{T_0}$ be the discriminant on the six variables. For a boron tree $T$, let $I_T$ be the ideal of $R_T$ generated by the elements $\phi_*(\delta)$, as $\phi$ varies over all embeddings $T_0 \to T$. Then $I_{\bullet}$ is a boric chain.

Consider the tree $T$ from Figure~\ref{fig:boron} with twelve leaves. The induced boron tree on $\{1,3,7,8,9,12\}$ is isomorphic to $T_0$, and so $I_T$ contains the discriminant
\begin{displaymath}
\Delta(x_1, x_3, x_7, x_8, x_9, x_{12}).
\end{displaymath}
The induced boron tree on $\{1,2,3,5,7,8\}$ is not isomorphic to $T_0$. Thus $I_T$ does not necessarily contain the discriminant
\begin{displaymath}
\Delta(x_1, x_2, x_3, x_5, x_7, x_8).
\end{displaymath}
In fact, it does not: the key point is that the discriminants on six variables are linearly independent in $R_T$ because they have distinct leading terms (if $i_1 < \cdots < i_6$ then the leading term of $\Delta(x_{i_1},\dots,x_{i_6})$ is $x_{i_1}^5 x_{i_2}^4\cdots x_{i_6}^0$).

Now let $T$ be an arbitrary boron tree, put $n=\# T_H$, and let $Z_T \subset \bC^n$ be the vanishing locus of the ideal $I_T$. This can be described as follows: a point of $\bC^n$ belongs to $Z_T$ if whenever we sample six distinct coordinates in ``relative position $T_0$,'' at least two of them are equal. Geometrically, $Z_T$ is a subspace arrangement in $\bC^n$, just as in Example~\ref{ex:symmetric}. However, the combinatorics of this subspace arrangement is more intricate than in the symmetric case. We also note that there are infinitely many boron trees $T$ into which $T_0$ does not embed, and thus $I_T=0$. In particular, the Krull dimension of $Z_T$ is unbounded as $T$ varies.
\end{example}

We do not prove anything about boric chains beyond Theorem~\ref{thm:intro-boron}. However, there are many intriguing directions to explore: for instance, one could study the projective dimension, regularity, or Hilbert series of boric chains, as the papers \cite{KLS, LNNR, LNNR2, NagelRomer1, NagelRomer2} do in the symmetric case; or one could attempt to classify boric chains, as the papers \cite{svar, sideals} do in the symmetric case. Theorem~\ref{thm:intro-boron} is almost certainly a pre-requisite for any further work on boric chains.

\subsection{Ideal systems}

We now describe our general results. Fix a field $k$. Let $\cC$ be a category equipped with a faithful conservative functor $\vert \cdot \vert \colon \cC \to \FI$, where $\FI$ is the category of finite sets and injections. Thus we can regard an object $A$ of $\cC$ as having an underlying set $\vert A \vert$. For an object $A$ of $\cC$, we let $R_A$ be the polynomial ring $k[x_i]_{i \in \vert A \vert}$. Given a morphism $\phi \colon A \to B$ in $\cC$, there is an induced ring homomorphism $\phi_* \colon R_A \to R_B$ by $\phi_*(x_i)=x_{\phi(i)}$. We define an \defn{ideal system} of $R$ to be a rule assigning to each object $A$ an ideal $I_A$ of $R_A$, such that for any morphism $\phi \colon A \to B$ we have $\phi_*(I_A) = I_B$. We say that $R$ is \defn{noetherian} if the ascending chain condition holds on ideal systems.

The following is a weak version of our main theorem:

\begin{theorem}
Suppose the following two conditions hold:
\begin{enumerate}
\item The category $\cC$ is orderable, i.e., it is possible to assign to each finite set $\vert A \vert$ a total order such that for each morphism $\phi \colon A \to B$ the function $\vert \phi \vert$ is order preserving.
\item The poset $\cM(\cC)$ consisting of isomorphism classes of weighted objects in $\cC$ is a well-quasi-order (see \S \ref{ss:monomial} for the definition).
\end{enumerate}
Then $R$ is noetherian.
\end{theorem}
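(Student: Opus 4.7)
The plan is to adapt the standard Gröbner-theoretic proof of Cohen's theorem to this categorical setting. First, using (a), I would fix for each object $A$ of $\cC$ a total order on $\vert A \vert$ in such a way that $\vert \phi \vert$ is order-preserving for every morphism $\phi$ of $\cC$. The lexicographic monomial order on each (finite-variable) polynomial ring $R_A$ then has the property that, for every $\phi \colon A \to B$, the map $\phi_*$ sends monomials to monomials, respects divisibility between them, and preserves lex comparison; in particular $\phi_*$ commutes with the formation of leading terms of polynomials.

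Given an ideal system $I_\bullet$, I would next form the initial ideal system $\init(I)_A := \init(I_A)$ with respect to these lex orders. The previous observation gives $\phi_*(\init(I_A)) \subset \init(\phi_*(I_A)) \subset \init(I_B)$, so $\init(I)_\bullet$ is a \emph{monomial} ideal system. Classical Gröbner basis theory, applied pointwise in each finite-variable ring $R_A$, then yields the key reduction: if $I_\bullet \subset J_\bullet$ and $\init(I)_A = \init(J)_A$ for every $A$, then $I_A = J_A$ for every $A$. Consequently an ascending chain of ideal systems stabilizes as soon as its chain of initial ideal systems does, and it suffices to prove the ascending chain condition for monomial ideal systems.

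For the final step, I would identify monomial ideal systems with up-closed subsets of $\cM(\cC)$. A monomial in $R_A$ is the same as a weighting $w \colon \vert A \vert \to \bZ_{\ge 0}$, i.e.\ a weighted object of $\cC$, and the condition that a set of such monomials be closed under multiplication by $R_A$ and under the maps $\phi_*$ is exactly the condition that the corresponding set of weighted objects be upward closed in the natural partial order $(A,w) \le (B,w')$ iff there is a morphism $\phi \colon A \to B$ in $\cC$ with $w(i) \le w'(\phi(i))$ for all $i \in \vert A \vert$. Under this identification, ACC for monomial ideal systems becomes ACC for up-sets of the poset $\cM(\cC)$, which is precisely the content of the well-quasi-order hypothesis (b).

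The only real thing to verify is that the partial order on $\cM(\cC)$ used in (b) agrees with the one just described---this should be a matter of unwinding the definition in the referenced subsection. Beyond that bookkeeping, (a) is what powers the reduction to monomial ideals and (b) is what handles the monomial case, so I do not expect any single step to present a serious obstacle; the proof is essentially a transcription of the classical symmetric argument, with $\FI$ replaced by $\cC$ throughout.
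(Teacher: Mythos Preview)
Your proposal is correct and follows essentially the same route as the paper: fix an ordering to make $\init(I_\bullet)$ a monomial ideal system, reduce via the standard Gr\"obner lemma to ACC for monomial ideal systems, and then identify monomial ideal systems with up-sets of $\cM(\cC)$ so that hypothesis~(b) finishes the argument. The paper packages these three steps as Proposition~\ref{prop:init}, Proposition~\ref{prop:grobner}, and Proposition~\ref{prop:monomial2}/Corollary~\ref{cor:mono-acc}, but the content is identical to what you outline.
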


The main point of the above theorem is that the hypotheses are purely combinatorial, while the conclusion is algebraic.

We say that $\cC$ is a \defn{G-category} if it satisfies conditions (a) and (b) of the theorem. We also define a broader class of categories, \defn{QG-categories}, where we only require an auxiliary category to be orderable. Our main theorem is that $R$ is noetherian whenever $\cC$ is a QG-category. This is an important improvement since many categories of interest are QG-categories but not G-categories; for instance, a G-category cannot have non-trivial automorphisms. This theorem, and its proof, are inspired by the approach of \cite{catgb}.

We show that the category $\FI$ of finite sets and injections is a QG-category; this recovers Cohen's theorem. We also show that the category of boron trees is a QG-category; this yields Theorem~\ref{thm:intro-boron}. We give a few additional examples as well.

\subsection{Equivariant ideals} \label{ss:intro-eq}

Suppose that $I_{\bullet}$ is a symmetric chain of ideals. Then $I=\bigcup_{n \ge 1} I_n$ is an ideal of the infinite variable polynomial ring $S=k[x_1, x_2, \ldots]$ that is stable under the action of the infinite symmetric group $\fS$. Cohen's theorem implies that $I$ (or any $\fS$-stable ideal of $S$) is generated by the $\fS$-orbits of finitely many elements. In other words, the ring $S$ is what is called \defn{$\fS$-noetherian}. In fact, this was Cohen's original formulation of his theorem.

In certain cases, our results can be recast in a similar manner. We explain the case of boron trees. The class of boron trees admits a so-called Fra\"iss\'e limit (see \S \ref{ss:fraisse}): this is a countably infinite boron tree $T$ that is extremely symmetrical, and into which every finite boron tree embeds. Let $\Gamma$ be the automorphism group of $T$, and let $\Omega=T_H$ be the set of hydrogen atoms. Using Theorem~\ref{thm:intro-boron}, we show that the polynomial ring $S=k[x_i]_{i \in \Omega}$ is $\Gamma$-noetherian.

More generally, one can pass to infinite permutation groups provided that $\cC$ is (the category associated to) a Fra\"iss\'e class. There are many interesting cases where this happens (such as with boron trees), though there are also many where it does not (such as with the permutation classes discussed in \S \ref{ss:perm}). The infinite permutation groups coming out of Fra\"iss\'e limits are the so-called \defn{oligomorphic groups}, and closely connected to model theory; see \cite{CameronBook} for general background.

\subsection{Related work} \label{ss:related}

This paper ties in with a number of threads of research.
\begin{itemize}
\item Of course, the primary antecedent of this paper is Cohen's work \cite{cohen, cohen2}; we have already discussed the profusion of related papers.
\item The Gr\"obner-theoretic methods of this paper are adapted from those of \cite{catgb}.
\item We were inspired to consider oligomorphic group actions on polynomial rings by the recent application of these groups in representation theory \cite{repst, homoten, bcat, Nekrasov}.
\item We study permutation group actions on infinite polynomial rings. Actions of linear groups, especially $\GL_{\infty}$, on infinite variable polynomial rings have also received much attention recently; see, e.g., \cite{polygeom, DraismaNoeth, sym2noeth, symc1, symu1}.
\item Polynomial rings with variables indexed by the vertices of a tree arise frequently in algebraic statistics; see, e.g., \cite{AD, GMN, SA}. We do not have a concrete application of our work here, but it feels potentially relevant, especially in light of the success of Cohen's original theorem in this area.
\end{itemize}

%
%

\subsection{Outline}

In \S \ref{s:sys}, we study systems of ideals in finite variable polynomial rings. In \S \ref{s:eq}, we study equivariant ideals in infinite variable polynomial rings. In \S \ref{s:boron} we examine boron trees in detail, which is one of the motivating examples.

%

\subsection*{Acknowledgments}

We thank Jan Draisma for helpful conversations.

\section{Systems of ideals} \label{s:sys}

\subsection{The basic objects}

The following definition introduces the categories that will organize our systems of polynomial rings. We recall that a functor $\Phi$ is \defn{conservative} if it is isomorphism reflecting, i.e., if $f$ is a morphism in the source category such that $\Phi(f)$ is an isomorphism then $f$ is an isomorphism.

\begin{definition}
An \defn{$\FI$-concrete category} is a category $\cC$ equipped with a faithful conservative functor $\vert \cdot \vert \colon \cC \to \FI$. If $\cC$ and $\cD$ are such categories, a \defn{concrete functor} $\Phi \colon \cC \to \cD$ is a functor equipped with a natural isomorphism $\vert A \vert \to \vert \Phi(A) \vert$.
\end{definition}

Let $\cC$ be an $\FI$-concrete category. For an object $A$ of $\cC$, we let $R_A=k[x_i]_{i \in \vert A \vert}$ be the polynomial ring over the field $k$ in variables indexed by the finite set $\vert A \vert$. If $\phi \colon A \to B$ is a morphism in $\cC$ then there is an induced $k$-algebra homomorphism $\phi_* \colon R_A \to R_B$ defined by $\phi_*(x_i)=x_{\phi(i)}$. Thus $R$ defines a functor from $\cC$ to $k$-algebras. When the dependence on $\cC$ needs to be indicated, we write $R(\cC)$ in place of $R$.

An \defn{ideal system} in $R$ is a rule attaching to each object $A$ of $\cC$ an ideal $I_A$ in the ring $R_A$ such that for any map $\phi \colon A \to B$ in $\cC$ we have $\phi_*(I_A) \subset I_B$. If $I$ and $J$ are two ideal systems, we write $I \subset J$ to mean $I_A \subset J_A$ for all objects $A$ of $\cC$. We say that $R$ is \defn{noetherian} if ideal systems of $R$ satisfy the ascending chain condition.

\begin{remark} \label{rmk:multi}
Suppose $(\cC, \vert \cdot \vert)$ is an $\FI$-concrete category. Fix $m \ge 1$, and let $[m]=\{1, \ldots, m\}$. For an object $A$ of $\cC$, put $\vert A \vert_m=\vert A \vert \times [m]$. Then $(\cC, \vert \cdot \vert_m)$ is also an $\FI$-concrete category. Let $R^m=R(\cC, \vert \cdot \vert_m)$; explicitly, $R^m_A=k[x_{i,j}]_{i \in \vert A \vert, j \in [m]}$. Thus our set-up accommodates multiple sets of variables simply by adjusting the functor $\vert \cdot \vert$.
\end{remark}

\subsection{Monomials} \label{ss:monomial}

Let $\cC$ be an $\FI$-concrete category. A \defn{weighting} on an object $A$ of $\cC$ is a function $\alpha \colon \vert A \vert \to \bN$. Such weightings correspond to monomials in the ring $R_A$. A \defn{weighted object} of $\cC$ is a pair $(A, \alpha)$ consisting of an object $A$ of $\cC$ and a weighting $\alpha$ on it. A \defn{morphism} of weighted objects $(A, \alpha) \to (B, \beta)$ is a morphism $\phi \colon A \to B$ in $\cC$ such that $\alpha(x) \le \beta(\phi(x))$ for all $x \in \vert A \vert$. In this way, we have a category $\cC^*$ of weighted objects. We note that $\phi \colon (A,\alpha) \to (B,\beta)$ is an isomorphism if and only if $\phi \colon A \to B$ is an isomorphism and $\alpha=\phi^*(\beta)$.

Let $\cM(\cC)$ be the set of isomorphism classes in $\cC^*$. We write $[A, \alpha]$ for the class of $(A, \alpha)$. Define a partial order on $\cM(\cC)$ by $[A,\alpha] \le [B,\beta]$ if there exists a map $(A,\alpha) \to (B,\beta)$ in $\cC^*$. One easily verifies that this is indeed a partial order; we note that the anti-symmetry of this order relies on the functor $\vert \cdot \vert$ being conservative.

Let $I$ be an ideal system of $R$. We say that $I$ is a \defn{monomial ideal system} if $I_A$ is a monomial ideal of $R_A$ for all objects $A$. For a weighted object $(A,\alpha)$, we let $m_{\alpha}$ be the monomial $\prod_{i \in \vert A \vert} x_i^{\alpha(i)}$ in the ring $R_A$. We define $\Phi(I) \subset \cM(\cC)$ to be the set of elements $[A,\alpha]$ such that the monomial $m_{\alpha}$ belongs to $I_A$.

\begin{proposition} \label{prop:monomial2}
The construction $\Phi$ defines an isomorphism of posets
\begin{displaymath}
\{ \text{monomial ideal systems of $R$} \} \stackrel{\sim}{\longrightarrow} \{ \text{order ideals of $\cM(\cC)$} \}.
\end{displaymath}
\end{proposition}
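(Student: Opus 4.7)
The plan is to write down an explicit inverse $\Psi$ to $\Phi$ and verify that both are monotone. The one bridge between the combinatorics of $\cM(\cC)$ and the algebra of monomial ideals, used throughout, is the following: a morphism $\phi \colon (A,\alpha) \to (B,\beta)$ in $\cC^*$ is by definition a $\cC$-morphism $\phi \colon A \to B$ satisfying $\alpha(x) \le \beta(\phi(x))$ for all $x \in \vert A \vert$, and for such a $\phi$ the element $\phi_*(m_\alpha) = \prod_i x_{\phi(i)}^{\alpha(i)}$ is a monomial that divides $m_\beta$ in $R_B$ (using that $\vert \phi \vert$ is an injection). This alone translates the partial order on $\cM(\cC)$ into divisibility/containment statements about the $m_\alpha$.

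First I would check that $\Phi(I)$ is an order ideal (closed under the relation ``there is a map up''). If $m_\alpha \in I_A$ and $[A,\alpha] \le [B,\beta]$ via some $\phi$, then $\phi_*(m_\alpha) \in I_B$ by the ideal system property, and then $m_\beta \in I_B$ because $m_\beta$ is a multiple of $\phi_*(m_\alpha)$ and $I_B$ is an ideal. Next I would define, for an order ideal $S \subset \cM(\cC)$,
\[
\Psi(S)_A = \langle m_\gamma : [A,\gamma] \in S \rangle \subset R_A,
\]
a monomial ideal of $R_A$. To see $\Psi(S)$ is an ideal system, I would fix a morphism $\phi \colon A \to B$ in $\cC$ and a generator $m_\gamma$; writing $\phi_*(m_\gamma) = m_{\gamma'}$ for the weighting $\gamma'$ on $B$ supported on $\im(\phi)$ with $\gamma'(\phi(i)) = \gamma(i)$, the map $\phi$ itself witnesses $[A,\gamma] \le [B,\gamma']$, so $[B,\gamma'] \in S$ by closure of $S$, and hence $\phi_*(m_\gamma) = m_{\gamma'} \in \Psi(S)_B$.

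The remaining verifications are then routine. For $\Psi \circ \Phi = \mathrm{id}$, the identity $\Psi(\Phi(I))_A = I_A$ holds because any monomial ideal equals the ideal generated by the monomials it contains. For $\Phi \circ \Psi = \mathrm{id}$, one containment is immediate from the definitions; for the other, any monomial $m_\beta \in \Psi(S)_B$ is divisible by some generator $m_\gamma$ with $[B,\gamma] \in S$, and the identity map on $B$ then witnesses $[B,\gamma] \le [B,\beta]$, so $[B,\beta] \in S$ by closure. Monotonicity of $\Phi$ with respect to inclusion of ideal systems and of order ideals is immediate from the definitions, and likewise for $\Psi$. I do not expect a single hard step; the only real pitfall is keeping the conventions straight, and in particular exploiting at every turn that the underlying maps in $\cC$ are \emph{injections}, so that pushing a monomial forward yields a monomial rather than a more general polynomial.
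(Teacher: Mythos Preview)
Your proposal is correct and follows essentially the same approach as the paper: define the inverse $\Psi$ explicitly, check that $\Phi(I)$ is an order ideal and $\Psi(S)$ is a monomial ideal system using the divisibility translation you highlight, and then verify $\Phi$ and $\Psi$ are mutually inverse and monotone. The only cosmetic difference is that the paper defines $\Psi(\cI)_A$ as the $k$-span of the relevant monomials (and separately checks it is an ideal), whereas you take the ideal generated; correspondingly, for $\Phi\circ\Psi=\mathrm{id}$ the paper argues that a monomial in the span must be one of the spanning monomials, while you argue via divisibility and upward closure---both are fine.
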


\begin{proof}
Let $I$ be an (arbitrary) ideal system of $R$. We first verify that $\Phi(I)$ is an order ideal in $\cM(\cC)$. Thus suppose $[A, \alpha] \in \Phi(I)$, and we have $[A,\alpha] \le [B,\beta]$. By definition of the partial order, there is a morphism $\phi \colon A \to B$ in $\cC$ such that $\alpha(x) \le \beta(\phi(x))$ for all $x \in \vert A \vert$. This implies that $\phi_*(m_{\alpha})$ divides $m_{\beta}$ in the ring $R_B$. Since $I_B$ contains $\phi_*(I_A)$, and thus $\phi_*(m_{\alpha})$, it follows that it contains $m_{\beta}$. Thus $[B,\beta] \in \Phi(I)$, as required.

Now suppose that we are given an order ideal $\cI$ of $\cM(\cC)$. For an object $A$ of $\cC$, define $I_A$ to be the $k$-span of the monomials $m_{\alpha}$ for which $[A,\alpha] \in \cI$. We claim that $I$ is a monomial ideal system of $R$. First, it is clear that $I_A$ is a monomial ideal of $R_A$: indeed, if $m_{\alpha} \in I_A$ and $m_{\alpha}$ divides some monomial $m_{\beta}$ of $R_A$ then $[A,\alpha] \le [A,\beta]$, so $[A,\beta]$ belongs to $\cI$, and so $m_{\beta}$ belongs to $I_A$. Now suppose $\phi \colon A \to B$ is a morphism in $\cC$. We must show $\phi_*(I_A) \subset I_B$. Let $[A,\alpha] \in \cI$ be given. Define a weighting $\beta$ on $B$ by $\beta(i)=\alpha(\phi^{-1}(i))$ if $i \in \im(\vert \phi \vert)$, and $\beta(i)=0$ otherwise. Then $\phi$ defines a morphism $(A,\alpha) \to (B,\beta)$ in $\cC^*$, and so $[A,\alpha] \le [B,\beta]$ in $\cM(\cC)$. Since $\cI$ is an order ideal, it follows that $[B,\beta] \in \cI$, and so $m_{\beta} \in I_B$. On the other hand, $m_{\beta}=\phi_*(m_{\alpha})$. Since $I_A$ is spanned by such $m_{\alpha}$'s, it follows that $\phi_*(I_A) \subset I_B$, which proves the claim. We put $\Psi(\cI)=I$.

If $I$ is a monomial ideal system of $R$ then $I=\Psi(\Phi(I))$: indeed, $\Psi(\Phi(I))_A$ is the $k$-span of the monomials contained in $I_A$, which is $I_A$ since $I_A$ is a monomial ideal. Similarly, if $\cI$ is an order ideal of $\cM(\cC)$ then $\cI=\Psi(\Phi(\cI))$. Indeed, it is clear that $\cI \subset \Psi(\Phi(\cI))$. We show the reverse. Thus suppose that $[A,\alpha] \in \Psi(\Phi(\cI))$. This means that the monomial $m_{\alpha}$ belongs to the ideal $\Phi(\cI)_A$. But since $\Phi(\cI)_A$ is spanned by a set of monomials, it follows that $m_{\alpha}$ is one of the monomials in this spanning set, and so $[A,\alpha] \in \cI$.

We thus see that $\Phi$ and $\Psi$ are mutually inverse bijections. It is clear that each is order preserving, and so the result follows.
\end{proof}

\begin{corollary} \label{cor:mono-acc}
Suppose that $\cM(\cC)$ is wqo. Then monomial ideal systems of $R$ satisfy ACC.
\end{corollary}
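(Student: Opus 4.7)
The plan is to combine Proposition~\ref{prop:monomial2} with a standard fact characterizing wqo's in terms of their posets of upper sets. By Proposition~\ref{prop:monomial2}, the assignment $I \mapsto \Phi(I)$ is an isomorphism of posets between monomial ideal systems of $R$ and order ideals of $\cM(\cC)$. Since this bijection is order-preserving in both directions, ACC on monomial ideal systems of $R$ is equivalent to ACC on order ideals of $\cM(\cC)$. So the entire corollary reduces to the assertion that the poset of order ideals of a wqo satisfies ACC.

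To prove this purely order-theoretic statement, I would argue by contradiction. Note that, from the proof of Proposition~\ref{prop:monomial2}, the term ``order ideal'' here refers to upward-closed subsets of $\cM(\cC)$ (if $[A,\alpha]$ lies in $\Phi(I)$ and $[A,\alpha] \le [B,\beta]$, then $[B,\beta]$ lies in $\Phi(I)$). Suppose then that we had a strictly ascending chain
\begin{displaymath}
\cI_1 \subsetneq \cI_2 \subsetneq \cI_3 \subsetneq \cdots
\end{displaymath}
of order ideals of $\cM(\cC)$. For each $n \ge 1$, pick an element $x_n \in \cI_{n+1} \setminus \cI_n$. The wqo hypothesis applied to the sequence $(x_n)_{n \ge 1}$ yields indices $i < j$ with $x_i \le x_j$. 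Since $x_i \in \cI_{i+1}$ and $\cI_{i+1}$ is upward-closed, we get $x_j \in \cI_{i+1} \subset \cI_j$, contradicting the choice $x_j \notin \cI_j$.

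There is no real obstacle here: the hard work is already encoded in Proposition~\ref{prop:monomial2}, which translates the algebraic ACC statement into a combinatorial statement about $\cM(\cC)$, and the remaining step is a short and classical argument about wqo's. The only minor point to be careful about is the distinction between upward-closed and downward-closed ``order ideals''; the convention is dictated by the setup of Proposition~\ref{prop:monomial2}, and the argument above is tailored accordingly.
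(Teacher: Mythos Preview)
Your argument is correct and matches the paper's approach: the paper states the corollary without proof immediately after Proposition~\ref{prop:monomial2}, treating it as a direct consequence of that isomorphism together with the standard fact that upward-closed subsets of a wqo satisfy ACC. You have simply written out the details the paper leaves implicit, including the careful observation that ``order ideal'' here means upward-closed.
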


Recall that wqo stands for well-quasi-order. We refer to \cite[\S 1.2]{BV} for background on this concept.

\subsection{Orders}

Let $\cC$ be an $\FI$-concrete category. An \defn{ordering} on $\cC$ is a rule assigning to each object $A$ of $\cC$ a total order on the set $\vert A \vert$ such that if $\phi \colon A \to B$ is a morphism in $\cC$ then $\vert \phi \vert \colon \vert A \vert \to \vert B \vert$ is order-preserving. In other words, an ordering on $\cC$ is a lift of the functor $\vert \cdot \vert \colon \cC \to \FI$ through the forgetful functor $\OI \to \FI$, where $\OI$ is the category of totally ordered finite sets and monotonic injections. We say that $\cC$ is \defn{orderable} if it admits an ordering.

Fix an ordering on $\cC$. For an object $A$ of $\cC$, we thus have a total order on the variables in $R_A$, which induces a monomial order on $R_A$ (lexicographic order). For an element $f \in R_A$ we let $\init(f)$ be its initial term, and for an ideal $I \subset R_A$ we let $\init(I)$ be its initial ideal. For an ideal system $I$ of $R$, we define $\init(I)$ to be the rule assigning to an object $A$ the the ideal $\init(I_A)$ of $R_A$.

\begin{proposition} \label{prop:init}
If $I$ is an ideal system of $R$ then $\init(I)$ is a monomial ideal system of $R$.
\end{proposition}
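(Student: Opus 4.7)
The plan is to show the two required properties of $\init(I)$ in sequence: first, that each $\init(I_A)$ is a monomial ideal of $R_A$; second, that the collection $\{\init(I_A)\}$ is compatible with the maps $\phi_*$. The first property is immediate from standard Gr\"obner basis theory, since the initial ideal of any ideal with respect to a monomial order is, by definition, generated by monomials.

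The real content is the second property, for which the key step is to prove that the formation of initial terms commutes with the induced maps: for any morphism $\phi \colon A \to B$ in $\cC$ and any nonzero $f \in R_A$, one has
\begin{equation*}
\init(\phi_*(f)) \;=\; \phi_*(\init(f)).
\end{equation*}
Granting this, the proposition follows quickly. Indeed, $\init(I_A)$ is generated as an ideal by the set $\{\init(f) : f \in I_A\}$, and for each such $f$ we have $\phi_*(f) \in I_B$ (by definition of an ideal system), so $\phi_*(\init(f)) = \init(\phi_*(f)) \in \init(I_B)$. Since $\phi_*$ is a $k$-algebra map, it follows that $\phi_*(\init(I_A)) \subset \init(I_B)$, giving the ideal-system property.

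It remains to establish the commutation formula. The main point is that the ordering on $\cC$ makes $\vert \phi \vert \colon \vert A \vert \to \vert B \vert$ order-preserving, which in turn forces lexicographic order on monomials to be preserved by $\phi_*$. Concretely, a monomial $m_\alpha \in R_A$ with weighting $\alpha$ maps to the monomial $m_{\phi_*(\alpha)} \in R_B$, where $\phi_*(\alpha)(j)=\alpha(\phi^{-1}(j))$ for $j \in \im(\vert \phi \vert)$ and $\phi_*(\alpha)(j)=0$ otherwise. Given two weightings $\alpha, \beta$ on $A$ with $m_\alpha >_{\mathrm{lex}} m_\beta$, let $i_0 \in \vert A \vert$ be the smallest position where they differ (so $\alpha(i_0) > \beta(i_0)$). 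Then $\phi_*(\alpha)$ and $\phi_*(\beta)$ agree on all of $\vert B \vert \setminus \im(\vert \phi \vert)$ (both are zero there), and agree on all elements of $\im(\vert \phi \vert)$ strictly below $\phi(i_0)$ since $\vert \phi \vert$ is order-preserving; at $\phi(i_0)$ the first exceeds the second. Hence $m_{\phi_*(\alpha)} >_{\mathrm{lex}} m_{\phi_*(\beta)}$. Applying this to the pair consisting of the leading monomial of $f$ and any other monomial of $f$ gives the commutation formula.

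The only subtlety (and the main place to be careful) is the handling of positions in $\vert B \vert \setminus \im(\vert \phi \vert)$: one must observe that because the pushed-forward weightings vanish on these positions, such positions contribute equally to both monomials and therefore do not disturb the lex comparison. Once this bookkeeping is made explicit, the rest is formal.
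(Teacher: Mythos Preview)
Your proof is correct and follows essentially the same approach as the paper: both reduce to the commutation formula $\init(\phi_*(f))=\phi_*(\init(f))$ and deduce $\phi_*(\init(I_A))\subset\init(I_B)$ from it. The only difference is one of detail---the paper simply asserts that $\phi_*$ commutes with taking initial terms because $\vert\phi\vert$ is order-preserving, whereas you spell out the lex comparison explicitly, including the bookkeeping on $\vert B\vert\setminus\im(\vert\phi\vert)$.
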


\begin{proof}
By definition, $\init(I_A)$ is a monomial ideal. Let $\phi \colon A \to B$ be a morphism in $\cC$ and let $m \in \init(I_A)$ be a monomial. Then $m=\init(f)$ for some $f \in I_A$. Since $\vert \phi \vert \colon \vert A \vert \to \vert B \vert$ is order-preserving, the map $\phi_*$ commutes with formation of initial terms, and so $\phi_*(m)=\init(\phi_*(f))$. Since $\phi_*(f)$ belongs to $I_B$, it follows that $\phi_*(m)$ belongs to $\init(I_B)$. We thus see that $\phi_*(\init(I_A)) \subset \init(I_B)$, which completes the proof.
\end{proof}

\begin{proposition} \label{prop:grobner}
Let $I \subset J$ be ideal systems of $R$ such that $\init(I)=\init(J)$. Then $I=J$.
\end{proposition}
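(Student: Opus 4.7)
My plan is to reduce the statement to the standard Gr\"obner basis argument applied at each object separately. Since ideal systems are equal exactly when they agree pointwise, and $\init(I)_A = \init(I_A)$ by definition, the hypothesis unwinds to the following: for every object $A$ of $\cC$, the two ideals $I_A \subset J_A$ of the finite-variable polynomial ring $R_A$ have the same initial ideal with respect to the lexicographic order induced by the chosen ordering on $\vert A \vert$. The functoriality that knits the system together, and all the compatibility enjoyed by the maps $\phi_*$, play no role once the problem is localized to a single $A$: all the content lives in this classical statement about a pair of nested ideals in $R_A$.

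For the objectwise step I would argue by contradiction via standard initial-term reduction. Since $\vert A \vert$ is finite, lex is a well-order on the monomials of $R_A$, so if $J_A \setminus I_A$ were non-empty one could choose $f$ in it with minimal initial monomial $\init(f)$. Since $\init(f) \in \init(J_A) = \init(I_A)$, there exists $g \in I_A$ whose initial term divides that of $f$, say $\init(f) = c \cdot m \cdot \init(g)$ for a nonzero scalar $c$ and a monomial $m$. Then $f' := f - cmg$ still lies in $J_A \setminus I_A$ (it is in $J_A$ because $g \in I_A \subset J_A$, and it is not in $I_A$ because $f \notin I_A$), yet its initial monomial is strictly smaller than $\init(f)$, contradicting minimality. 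Hence $J_A = I_A$, and varying $A$ gives $I = J$.

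I do not anticipate any substantive obstacle. The only ingredient beyond the objectwise classical argument is Proposition~\ref{prop:init}, which ensures that $\init$ commutes with the formation of ideal systems and so the hypothesis has a sensible pointwise interpretation. Finiteness of $\vert A \vert$ supplies the well-ordering of monomials needed for termination of the reduction, and this is automatic from the definition of an $\FI$-concrete category.
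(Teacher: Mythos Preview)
Your proposal is correct and matches the paper's approach exactly: the paper's proof is the single sentence ``This is the usual Gr\"obner lemma,'' which is precisely the objectwise reduction you carry out in detail. Your minimal-initial-term argument is the standard one, and the only minor omission is the observation that $f'$ cannot be zero (else $f = cmg \in I_A$), which is implicit in your setup.
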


\begin{proof}
This is the usual Gr\"obner lemma.
\end{proof}

\subsection{Transfering noetherianity}

The next proposition shows how we can sometimes transfer noetherianity results between different categories.


\begin{proposition} \label{prop:F}
Let $\Phi \colon \cD \to \cC$ be a concrete functor of $\FI$-concrete categories that is essentially surjective. If $R(\cD)$ is noetherian then $R(\cC)$ is also noetherian.
\end{proposition}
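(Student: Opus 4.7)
The plan is to define a pullback operation $\Phi^*$ on ideal systems and show that it injects strictly ascending chains of ideal systems of $R(\cC)$ into strictly ascending chains of ideal systems of $R(\cD)$; noetherianity of the latter then forces ascending chains in $R(\cC)$ to stabilize.

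More precisely, fix the natural isomorphism $\iota_A \colon \vert A \vert \to \vert \Phi(A) \vert$ supplied by the concrete structure on $\Phi$. This induces a $k$-algebra isomorphism $\iota_{A,*} \colon R_A \to R_{\Phi(A)}$. Given an ideal system $I$ of $R(\cC)$, define $\Phi^*(I)_A = \iota_{A,*}^{-1}(I_{\Phi(A)})$ for each object $A$ of $\cD$. The first step is to check that $\Phi^*(I)$ is an ideal system of $R(\cD)$: this is a direct computation using the naturality of $\iota$, which says that for $\phi \colon A \to B$ in $\cD$ the square relating $\phi_*$ and $\Phi(\phi)_*$ via $\iota_{A,*}$ and $\iota_{B,*}$ commutes. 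It is clear from the definition that $\Phi^*$ is order-preserving.

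Next, and this is where essential surjectivity enters, I would show that $\Phi^*$ is injective, or more precisely that it reflects equality of ideal systems. Given objects $C$ of $\cC$, essential surjectivity produces an object $A$ of $\cD$ together with an isomorphism $\psi \colon \Phi(A) \to C$ in $\cC$. If $I \subset J$ are ideal systems of $R(\cC)$ with $\Phi^*(I) = \Phi^*(J)$, then $I_{\Phi(A)} = J_{\Phi(A)}$ for every $A$ in $\cD$. The key observation is that for any isomorphism $\psi$ in $\cC$, the induced map $\psi_*$ is a ring isomorphism and $\psi_*(I_{\Phi(A)}) \subset I_C$ while $(\psi^{-1})_*(I_C) \subset I_{\Phi(A)}$, so $\psi_*(I_{\Phi(A)}) = I_C$ and likewise for $J$. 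Hence $I_C = \psi_*(I_{\Phi(A)}) = \psi_*(J_{\Phi(A)}) = J_C$ for every $C$ in $\cC$, so $I = J$.

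Finally, given an ascending chain $I^{(1)} \subset I^{(2)} \subset \cdots$ of ideal systems of $R(\cC)$, applying $\Phi^*$ produces an ascending chain in $R(\cD)$, which stabilizes by hypothesis. By the injectivity step, the original chain must stabilize at the same index, so $R(\cC)$ is noetherian. The only delicate point is the bookkeeping around the natural isomorphism $\iota$ and the verification that ideal systems are automatically $\mathrm{Aut}$-equivariant in the sense used above; everything else is formal.
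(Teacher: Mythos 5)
Your proof is correct and takes essentially the same route as the paper's: pull back ideal systems of $R(\cC)$ along $\Phi$ using the concrete-functor isomorphisms, invoke noetherianity of $R(\cD)$, and conclude via essential surjectivity. You are in fact slightly more explicit than the paper at the step where isomorphism-invariance of ideal systems is used (the observation that $\psi_*(I_{\Phi(A)}) = I_C$ for any isomorphism $\psi$ in $\cC$, obtained by applying the ideal-system inclusion to both $\psi$ and $\psi^{-1}$), which the paper leaves implicit in the phrase ``Since $\Phi$ is essentially surjective.''
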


\begin{proof}
Put $R=R(\cD)$ and $R'=R(\cC)$, and let $I'$ be an ideal system of $R'$. If $A$ is an object of $\cC$ then we have a natural isomorphism $\vert A \vert = \vert \Phi(A) \vert$ (since $\Phi$ is a concrete functor), and thus a natural identification $R_A=R'_{\Phi(A)}$. We define $I_A$ to be the ideal of $R_A$ corresponding to $I'_{\Phi(A)} \subset R'_{\Phi(A)}$. If $\phi \colon A \to B$ is a morphism in $\cD$ then $\phi_* \colon R_A \to R_B$ is identified with $\Phi(\phi)_* \colon R'_{\Phi(A)} \to R'_{\Phi(B)}$. Since $I'$ is stable under the latter, it follows that $I$ is stable under the former. Thus $I$ is an ideal system of $R$.

Now suppose that $I'_{\bullet}$ is an ascending chain of ideal systems in $R'$. We thus obtain an ascending chain $I_{\bullet}$ of ideal systems of $R$. Since $R$ is noetherian, this chain stabilizes, and so there is some $n$ such that $I_n=I_m$ for any $m \ge n$. We thus see that $(I'_n)_{\Phi(A)}=(I'_m)_{\Phi(A)}$ for any $m \ge n$ and any object $A$ of $\cD$. Since $\Phi$ is essentially surjective, it follows that $I'_n=I'_m$, and so the original chain stabilizes. Thus $R'$ is noetherian.
\end{proof}

\subsection{The main theorem}

The following definition and theorem give a purely combinatorial criterion for the system $R$ of polynomial rings to be noetherian.

\begin{definition}
A \defn{G-category} is an $\FI$-concrete category that is orderable and for which $\cM(\cC)$ is wqo. A \defn{QG-category} is an $\FI$-concrete category for which there exists a G-category $\cD$ and a concrete functor $\cD \to \cC$ that is essentially surjective.
\end{definition}

\begin{theorem} \label{thm:noethsys}
Let $\cC$ be a QG-category. Then $R$ is noetherian.
\end{theorem}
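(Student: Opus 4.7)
The plan is a two-step argument that packages almost all the work into previously established results. First I would use the definition of QG-category to reduce to the case that $\cC$ is itself a G-category, and then run the standard Gröbner argument: pass to initial ideals, invoke the wqo hypothesis to get ACC on monomial ideal systems, and lift the stabilization back to arbitrary ideal systems via the Gröbner lemma.

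In more detail, suppose $\cC$ is a QG-category. By definition, there exist a G-category $\cD$ and an essentially surjective concrete functor $\Phi \colon \cD \to \cC$. Proposition \ref{prop:F} asserts that noetherianity of $R(\cD)$ implies noetherianity of $R(\cC)$. Hence it suffices to prove the theorem under the stronger assumption that $\cC$ is itself a G-category, i.e., that $\cC$ admits an ordering and that $\cM(\cC)$ is wqo.

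Now fix an ordering on $\cC$ and let $I_1 \subset I_2 \subset I_3 \subset \cdots$ be an ascending chain of ideal systems of $R$. Proposition \ref{prop:init} says that each $\init(I_n)$ is a monomial ideal system, and the inclusion $I_n \subset I_{n+1}$ immediately gives $\init(I_n) \subset \init(I_{n+1})$. Since $\cM(\cC)$ is wqo, Corollary \ref{cor:mono-acc} tells us that the chain of monomial ideal systems $\{\init(I_n)\}_{n \ge 1}$ stabilizes at some index $N$. For $n \ge N$ we then have $I_N \subset I_n$ with $\init(I_N) = \init(I_n)$, so Proposition \ref{prop:grobner} yields $I_N = I_n$. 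The original chain therefore stabilizes, and $R$ is noetherian.

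The essential content has already been split between Corollary \ref{cor:mono-acc} (combinatorial: wqo implies ACC on monomial systems) and Propositions \ref{prop:init} and \ref{prop:grobner} (algebraic: Gröbner degeneration), so once these are in hand there is no genuine obstacle. The only conceptual point worth emphasizing is the role of Proposition \ref{prop:F} and the QG formalism, which serves to weaken the orderability hypothesis: a QG-category need not itself be orderable, but is covered by an orderable one, and noetherianity descends along the cover. This is exactly what will allow categories like $\FI$ (which has nontrivial automorphisms and hence cannot be orderable) to be treated, by exhibiting $\OI \to \FI$ as a presenting G-cover.
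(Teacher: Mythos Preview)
Your proof is correct and follows essentially the same route as the paper's own argument: reduce to the G-category case via Proposition~\ref{prop:F}, then run the Gr\"obner argument using Proposition~\ref{prop:init}, Corollary~\ref{cor:mono-acc}, and Proposition~\ref{prop:grobner}. The only difference is cosmetic---the paper treats the G-category case first and then invokes Proposition~\ref{prop:F}, whereas you do the reduction first---but the content is identical.
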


\begin{proof}
First suppose $\cC$ is a G-category, and fix an ordering on $\cC$. Let $I_{\bullet}$ be an ascending chain of ideal systems in $R$. Then $\init(I_{\bullet})$ is an ascending chain of monomial ideal systems in $R$ (Proposition~\ref{prop:init}), and thus stabilizes (Corollary~\ref{cor:mono-acc}). Hence the original chain stabilizes (Proposition~\ref{prop:grobner}), and so $R$ is noetherian.

We now treat the general case. Let $\cD$ be a G-category and let $\Phi \colon \cD \to \cC$ be a concrete functor that is essentially surjective. By the previous paragraph, $R(\cD)$ is noetherian. Thus $R(\cC)$ is noetherian (Proposition~\ref{prop:F}), which completes the proof.
\end{proof}

\begin{remark}
The G-category and QG-category terminology alludes to the Gr\"obner and quasi-Gr\"obner terminology from \cite{catgb}, though the definitions differ. For example, consider the category {\bf BOI} (``block $\OI$''), with objects given by finite totally ordered sets and morphisms given by block embeddings. That is for $n \leq m$, ${\rm Hom}_{\bf BOI}([n],[m]) = \{\phi_0,\dots,\phi_{m-n}\}$ where $\phi_i(j) = j+i$. One can show that this is a Gr\"obner category in the sense of \cite{catgb}, but not a G-category (or even a QG-category).
\end{remark}

\begin{remark}
Let $\cC$ be an $\FI$-concrete category, and let $R=R(\cC)$. An \defn{$R$-module system} is a rule assigning to each object $A$ of $\cC$ an $R_A$-module $M_A$ and to each morphism $\phi \colon A \to B$ in $\cC$ an additive map $\phi_* \colon M_A \to M_B$ satisfying $\phi_*(am)=\phi_*(a) \phi_*(m)$, where $a \in R_A$ and $m \in M_A$. There is a notion of finite generation and noetherianity for $R$-module systems. By combining the methods here with those of \cite{catgb}, one can give a combinatorial criterion that ensures all finitely generated $R$-module systems are noetherian. In the case $\cC = {\bf FI}$, $R(\cC)$-module systems were investigated in \cite{NagelRomer2}.
\end{remark}

\subsection{Examples} \label{ss:exsys}

We give a few simple examples of Theorem~\ref{thm:noethsys} here.
\begin{enumerate}
\item Let $\cC=\OI$, and take $\vert \cdot \vert$ to be the identity functor. This is an $\FI$-concrete category with a natural ordering. The set $\cM(\cC)$ consists of all words in the alphabet $\bN$, endowed with the Higman order; precisely, $n_1 \cdots n_r \le m_1 \cdots m_s$ if there is an injection $\phi \colon [r] \to [s]$ such that $n_i \le  m_{\phi(i)}$ for all $i \in [r]$. The set $\cM(\cC)$ is wqo by Higman's lemma. Thus $\cC$ is a G-category, and so $R$ is noetherian.
\item More generally, we can consider $\cC=\OI$ equipped with the functor $\vert \cdot \vert_m$ from Remark~\ref{rmk:multi}. Here $\cM(\cC)$ consists of all words in the alphabet $\bN^m$. Since $\bN^m$ is wqo (by Dickson's lemma), Higman's lemma again shows that $\cM(\cC)$ is a wqo. Thus $\cC$ is a G-category, and so $R$ is noetherian.
\item Let $\cC=\FI$, with its natural $\FI$-concrete structure. This category is not orderable. However, letting $\cD=\OI$, we have a concrete functor $\cD \to \cC$, namely, the forgetful functor. It follows that $\cC$ is a QG-category, and so $R$ is noetherian. This recovers Cohen's original theorem \cite{cohen}, as formulated in \cite{NagelRomer2}. There is a similar example using $\vert \cdot \vert_m$, which recovers Cohen's more general theorem from \cite{cohen2}.
\end{enumerate}

\subsection{Another example} \label{ss:perm}

We now give a somewhat more substantial example. A \defn{permutation} is a finite set equipped with two total orders, and an \defn{embedding} of permutations is an injection of finite sets that is monotonic with respect to both orders. A \defn{permutation class} is a class $\fC$ of permutations such that whenever $X$ belongs to $\fC$, any permutation embedding into $X$ also belongs to $\fC$. See \cite[\S 1.1]{BV} for additional background.

Let $\fC$ be a permutation class. Define $\cC$ to be the category whose objects are the members of $\fC$, and whose morphisms are embeddings. For a permutation $X$, we let $\vert X \vert$ be its underyling set; this endows $\cC$ with an $\FI$-concrete structure. The category $\cC$ is clearly orderable: just use the first total order on the permutations. Thus, if $\cM(\cC)$ is wqo, then $\cC$ is a G-category and the ring $R(\cC)$ is noetherian.

In \cite[\S 1.8]{BV}, the notion of \defn{labeled well-quasi-order (lwqo)} is discussed in the context of permutation classes. It is clear from the definition that if $\fC$ is lwqo then $\cM(\cC)$ is wqo. Thus the theory of \cite{BV} provides many examples of permutation classes for which $R(\cC)$ is noetherian. For example, this is the case if $\fC$ is the class of separable permutations; see \cite[Corollary~7.6]{BV} and the discussion following \cite[Corollary~7.7]{BV}.

\section{Equivariant ideals} \label{s:eq}

\subsection{The main theorem} \label{ss:equimain}

Let $\Gamma$ be a permutation group with domain $\Omega$. Let $S=k[x_i]_{i \in \Omega}$ be the polynomial ring equipped with its action of $\Gamma$. A \defn{$\Gamma$-ideal} of $S$ is an ideal stable by $\Gamma$. We say that $S$ is \defn{$\Gamma$-noetherian} if the ascending chain condition holds for $\Gamma$-ideals.

Define a category $\cC=\cC(\Gamma,\Omega)$ as follows. The objects are finite subsets of $\Omega$. A morphism $A \to B$ is a function $\phi \colon A \to B$ for which there exists $g \in \Gamma$ such that $\phi(x)=gx$ for all $x \in A$. The forgetful functor $\cC \to \FI$ makes $\cC$ into an $\FI$-concrete category. Let $R=R(\cC)$ be the system of polynomial rings associated to $\cC$.

\begin{theorem} \label{thm:RtoS}
If $R$ is noetherian then $S$ is $\Gamma$-noetherian.
\end{theorem}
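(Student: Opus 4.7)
The plan is to construct, from a $\Gamma$-ideal $J$ of $S$, a compatible ideal system $I$ of $R$, and then use noetherianity of $R$ together with the fact that every element of $S$ has finite support.

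Given a $\Gamma$-ideal $J \subset S$, define, for each object $A$ of $\cC$ (i.e., each finite subset $A \subset \Omega$), the ideal
\[
I_A \;=\; J \cap R_A \;\subset\; R_A,
\]
where $R_A = k[x_i]_{i \in A}$ is regarded as a subring of $S$ in the obvious way. The main check is that $I$ is an ideal system of $R$. Let $\phi \colon A \to B$ be a morphism in $\cC$, so $\phi(x)=gx$ for some $g \in \Gamma$. Then $\phi_* \colon R_A \to R_B$ is the restriction of the ring automorphism of $S$ induced by $g$; since $J$ is $\Gamma$-stable, $g \cdot J = J$, and so for $f \in I_A = J \cap R_A$ we have $\phi_*(f) = g \cdot f \in J$, while also $\phi_*(f) \in R_B$ (its variables are indexed by $\phi(A) \subset B$). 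Hence $\phi_*(I_A) \subset J \cap R_B = I_B$, as required.

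The construction $J \mapsto I$ is clearly order preserving. Given an ascending chain $J_1 \subset J_2 \subset \cdots$ of $\Gamma$-ideals of $S$, it therefore produces an ascending chain $I^{(1)} \subset I^{(2)} \subset \cdots$ of ideal systems of $R$. By the hypothesis that $R$ is noetherian, this chain stabilizes: there is $n$ with $I^{(n)} = I^{(m)}$ for all $m \ge n$, that is, $J_n \cap R_A = J_m \cap R_A$ for every finite $A \subset \Omega$.

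The final step is to deduce $J_n = J_m$ from this. Let $f \in J_m$. Then $f$ involves only finitely many variables, so there is a finite subset $A \subset \Omega$ with $f \in R_A$. Hence $f \in J_m \cap R_A = J_n \cap R_A \subset J_n$. Thus $J_m = J_n$ for all $m \ge n$, so $S$ is $\Gamma$-noetherian.

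I do not anticipate a real obstacle: the only thing that could go wrong is the verification that $\phi_*(I_A) \subset I_B$, and this is immediate from the defining property of morphisms in $\cC(\Gamma,\Omega)$ together with $\Gamma$-stability of $J$. The argument does not use any finiteness of $\Gamma$ or $\Omega$, nor any additional structure on $\cC$ beyond what is used to define $R(\cC)$.
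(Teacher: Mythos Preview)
Your proof is correct and follows essentially the same approach as the paper: both contract a $\Gamma$-ideal $J$ to the ideal system $I_A = J \cap R_A$, observe the construction is order preserving, and use that every element of $S$ lies in some $R_A$ (equivalently, $J = \bigcup_A I_A$) to conclude. You have simply spelled out in more detail the verification that $I$ is an ideal system and the final recovery of $J_n = J_m$ from $I^{(n)} = I^{(m)}$.
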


\begin{proof}
Let $J$ be a $\Gamma$-ideal of $S$. For a finite subset $A$ of $\Omega$, let $I_A$ be the contraction of $J$ to $R_A \subset S$. Then $I$ is a system of ideals in $R$. Moreover, $J=\bigcup_A I_A$, and so one can recover $J$ from $I$. It follows that the function
\begin{displaymath}
\{ \text{$\Gamma$-ideals of $S$} \} \to \{ \text{ideal systems of $R$} \}
\end{displaymath}
is injective. It is also clearly order-preserving. Since the target above satisfies ACC, so does the source.
\end{proof}

It is not difficult to see if $\cC$ is a G-category directly from $(\Gamma, \Omega)$. Let $\Omega^*$ be the set of all functions $\Omega \to \bN$ of finite support, and let $\cM=\cM(\Gamma, \Omega)$ be the set of $\Gamma$-orbits on $\Omega^*$. For $\alpha \in \Omega^*$ we write $[\alpha]$ for its class in $\cM$. We partially order $\cM$ by $[\alpha] \le [\beta]$ if there exists $g \in \Gamma$ such that $\alpha(x) \le \beta(gx)$ for all $x \in \Omega$.

\begin{proposition}
Suppose $\cM(\Gamma,\Omega)$ is wqo and $\Gamma$ preserves a total order on $\Omega$. Then $\cC$ is a G-category, and $S$ is $\Gamma$-noetherian
\end{proposition}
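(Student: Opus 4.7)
The plan is to reduce the proposition to an application of Theorem~\ref{thm:noethsys} followed by Theorem~\ref{thm:RtoS}: once $\cC$ is shown to be a G-category, $R(\cC)$ is noetherian, and this transfers to $\Gamma$-noetherianity of $S$. The substance of the proof therefore lies in verifying the two defining conditions of a G-category, namely orderability of $\cC$ and wqo-ness of $\cM(\cC)$.

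Orderability is immediate: the $\Gamma$-invariant total order on $\Omega$ restricts to a total order on each finite subset $A \subset \Omega$, and every morphism $\phi \colon A \to B$ in $\cC$ is the restriction of some $g \in \Gamma$, which is order-preserving on $\Omega$; hence $\phi$ is order-preserving.

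For wqo-ness of $\cM(\cC)$, my plan is to construct a map
\[
\iota \colon \cM(\cC) \to \cM(\Gamma, \Omega), \qquad [A, \alpha] \mapsto [\alpha^\dagger],
\]
that is well-defined and order-reflecting, thereby exhibiting $\cM(\cC)$ as a sub-poset of the wqo $\cM(\Gamma, \Omega)$. The naive choice $\alpha^\dagger := \tilde\alpha$ (zero-extension of $\alpha$ to $\Omega$) fails, since it forgets the distinction between $A$ and $\supp(\alpha)$. To fix this I would define
\[
\alpha^\dagger(x) = \begin{cases} \alpha(x) + 1 & \text{if } x \in A, \\ 0 & \text{if } x \notin A, \end{cases}
\]
so that $A$ is recovered as the positive-support locus of $\alpha^\dagger$. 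The key claim is the equivalence
\[
[A, \alpha] \leq [B, \beta] \text{ in } \cM(\cC) \iff [\alpha^\dagger] \leq [\beta^\dagger] \text{ in } \cM(\Gamma, \Omega),
\]
from which well-definedness of $\iota$ follows by applying the forward direction in both orientations to an isomorphism in $\cC^*$, and order-reflection is the reverse direction. The ``$\Rightarrow$'' direction is a direct pointwise check. For ``$\Leftarrow$'', if $g \in \Gamma$ satisfies $\alpha^\dagger(x) \leq \beta^\dagger(gx)$ for all $x \in \Omega$, then $\alpha^\dagger(x) \geq 1$ on $A$ forces $\beta^\dagger(gx) \geq 1$, hence $gx \in B$; this yields $g(A) \subset B$, and subtracting $1$ on both sides recovers $\alpha(x) \leq \beta(gx)$ on $A$, giving a morphism $(A, \alpha) \to (B, \beta)$ in $\cC^*$.

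The main obstacle I foresee is conceptual rather than technical: one must recognize that the naive zero-extension is inadequate, and that a shift by $1$ on $A$ is needed to encode the underlying set alongside the weighting so that the set-inclusion $g(A) \subset B$ is enforced by a pointwise numerical inequality. Once $\iota$ is in place, the remainder is routine bookkeeping: $\cM(\cC)$ is wqo, so $\cC$ is a G-category; Theorem~\ref{thm:noethsys} then gives noetherianity of $R(\cC)$, and Theorem~\ref{thm:RtoS} delivers the $\Gamma$-noetherianity of $S$.
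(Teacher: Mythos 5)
Your proof is correct and follows essentially the same route as the paper: establish orderability of $\cC$ by restricting the $\Gamma$-invariant order, show $\cM(\cC)$ is wqo by comparing it to $\cM(\Gamma,\Omega)$, then invoke Theorems~\ref{thm:noethsys} and~\ref{thm:RtoS}. The paper asserts the identification $\cM(\cC)\cong\cM(\Gamma,\Omega)$ with only ``one easily sees''; your explicit shifted embedding $[A,\alpha]\mapsto[\alpha^\dagger]$, together with the remark that naive zero-extension is neither injective nor order-reflecting (since it forgets the part of $A$ where $\alpha$ vanishes), supplies exactly the detail the paper suppresses.
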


\begin{proof}
One easily sees that $\cM(\cC)$ is isomorphic to $\cM(\Gamma,\Omega)$, and thus $\cM(\cC)$ is wqo. Fix an ordering $<$ on $\Omega$ preserved by $\Gamma$. This restricts to an ordering on each finite subset of $\Omega$, and thus induces an ordering on $\cC$. Thus $\cC$ is a G-category. Theorem~\ref{thm:noethsys} thus implies that $R$ is noetherian, and so $S$ is $\Gamma$-noetherian from Theorem~\ref{thm:RtoS}.
\end{proof}

\subsection{Examples} \label{ss:S-example}

We give a few simple examples of Theorem~\ref{thm:RtoS}.
\begin{enumerate}
\item Consider the infinite symmetric group $\fS$ acting on $\Omega=\{1,2,\ldots\}$. The associated category $\cC$ is then equivalent to $\FI$. The ring $R$ is noetherian by \S \ref{ss:exsys}(c), and so the ring $S=k[x_i]_{i \in \Omega}$ is $\fS$-noetherian by Theorem~\ref{thm:RtoS}. This recovers Cohen's theorem from \cite{cohen}, in its original form.
\item We can also recover Cohen's theorem for $m$ sets of variables \cite{cohen2} in its original form by considering the action of $\fS$ on $\Omega^m$. Then $\cC$ is the $\FI$-concrete category $(\FI, \vert \cdot \vert_m)$, where $\vert \cdot \vert$ denotes the identity functor $\FI \to \FI$. The ring $R$ is again noetherian, and so $S=k[x_{i,j}]_{i \in \Omega, j \in [m]}$ is $\fS$-noetherian.
\item Let $\Omega=\bQ$ be the set of rational numbers, and let $\Gamma$ be the group of order preserving bijections $\Omega \to \Omega$. One easily sees that $\cC$ is equivalent to $\OI$ in this case. Thus $R$ is noetherian by \S \ref{ss:exsys}(a), and so $S=k[x_i]_{i \in \bQ}$ is $\Gamma$-noetherian by Theorem~\ref{thm:RtoS}. It is not difficult to deduce this directly from Cohen's theorem, but as far as we know, this case had not been remarked on previously.
\end{enumerate}

\subsection{Necessary conditions} \label{ss:necc}

Let $(\Gamma, \Omega)$ be a permutation group. Let $\cM^{\circ}=\cM^{\circ}(\Gamma, \Omega)$ be the set of $\Gamma$-orbits of finite subsets of $\Omega$. We partially order $\cM^{\circ}$ by $[A] \le [B]$ if $A \subset gB$ for some $g \in \Gamma$. Note that $\cM^{\circ}$ is the subset of $\cM$ consisting of classes $[\alpha]$ where the function $\alpha \in \Omega^*$ is valued in $\{0,1\}$.

Let $S=k[x_i]_{i \in \Omega}$, as usual. We say that $S$ is \defn{topologically $\Gamma$-noetherian} if radical $\Gamma$-stable ideals satisfy ACC; equivalently, $\Gamma$-stable closed subsets of $\Spec(S)$ satisfy DCC. If $S$ is $\Gamma$-noetherian then it is clearly topologically $\Gamma$-noetherian. The converse does not hold in general (see the discussion of Krasilnikov's example before \cite[Proposition~3.5]{DraismaNotes}).

\begin{proposition} \label{prop:necc}
Suppose that $S=k[x_i]_{i \in \Omega}$ is topologically $\Gamma$-noetherian. Then $\cM^{\circ}$ is wqo.
\end{proposition}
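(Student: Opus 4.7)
The plan is to prove the contrapositive: if $\cM^\circ(\Gamma,\Omega)$ is not wqo, then $S$ is not topologically $\Gamma$-noetherian.

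Suppose $[A_1],[A_2],\ldots$ is a bad sequence in $\cM^\circ$; that is, $[A_i]\not\le[A_j]$ whenever $i<j$, which by definition of the order on $\cM^\circ$ means $A_i\not\subset gA_j$ for every $g\in\Gamma$. For a finite set $A\subset\Omega$, write $m_A=\prod_{i\in A}x_i\in S$ for the associated squarefree monomial; note $g\cdot m_A=m_{gA}$. I would then form the $\Gamma$-stable ideals
\[
J_n \;=\; \bigl(\, m_{gA_i}\,:\, g\in\Gamma,\ 1\le i\le n\,\bigr)\subset S,
\]
and argue that $J_1\subsetneq J_2\subsetneq\cdots$ is a chain of radical $\Gamma$-stable ideals, contradicting the hypothesis.

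To verify radicality, I will use the standard fact that any ideal generated by squarefree monomials is radical. In the infinite-variable ring $S$ this reduces to the finite-variable case: any $f\in\sqrt{J_n}$ lies in $k[x_i]_{i\in F}$ for some finite $F\subset\Omega$, and because $J_n$ is a monomial ideal its contraction $J_n\cap k[x_i]_{i\in F}$ is generated by those monomial generators of $J_n$ whose support lies in $F$; this is an ordinary Stanley--Reisner ideal, hence radical, so $f\in J_n$.

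The key step, and the core of the argument, is showing $m_{A_{n+1}}\notin J_n$. Since $J_n$ is a monomial ideal, the monomial $m_{A_{n+1}}$ belongs to $J_n$ if and only if some generator $m_{gA_i}$ with $i\le n$ divides it, if and only if $gA_i\subset A_{n+1}$ for some $g\in\Gamma$, if and only if $[A_i]\le[A_{n+1}]$ in $\cM^\circ$. This is precisely what the bad sequence hypothesis forbids for $i\le n$, so $m_{A_{n+1}}\notin J_n$, while trivially $m_{A_{n+1}}\in J_{n+1}$. Hence $J_n\subsetneq J_{n+1}$, producing a strictly ascending infinite chain of radical $\Gamma$-stable ideals and contradicting topological $\Gamma$-noetherianity. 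I do not anticipate a serious obstacle: the only delicate point is the radicality of squarefree monomial ideals in infinitely many variables, which is dispatched by the finite support reduction described above.
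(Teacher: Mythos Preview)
Your argument is correct and follows essentially the same route as the paper: both rest on the correspondence between $\Gamma$-stable squarefree monomial ideals of $S$ and order ideals of $\cM^{\circ}$, together with the radicality of squarefree monomial ideals. The paper packages this as a poset isomorphism and transfers ACC abstractly, whereas you build the offending ascending chain of ideals directly from a bad sequence; the underlying content is identical.
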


\begin{proof}
Adapting the proof of Proposition~\ref{prop:monomial2}, one can show that there is an isomorphism of posets
\begin{displaymath}
\{ \text{$\Gamma$-stable squarefree monomial ideals of $S$} \} \stackrel{\sim}{\longrightarrow} \{ \text{order ideals of $\cM^{\circ}$} \}.
\end{displaymath}
Here, a \defn{squarefree monomial ideal} is an ideal generated by squarefree monomials. One easily sees that squarefree monomial ideals are radical. Since $S$ is topologically $\Gamma$-noetherian, the left side above therefore satisfies ACC. Hence the right side does as well.
\end{proof}

The finiteness condition on $\cM^{\circ}$ appearing above implies the oligomorphic condition:

\begin{proposition}
Suppose that $\cM^{\circ}$ is wqo. Then $\Gamma$ is an oligomorphic permutation group, that is, $\Gamma$ has finitely many orbits on $\Omega^n$ for all $n$.
\end{proposition}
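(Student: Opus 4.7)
The plan is to extract from the wqo hypothesis the fact that $\Gamma$ has only finitely many orbits on the $n$-element subsets of $\Omega$ for each fixed $n$, and then to bootstrap to $\Omega^n$ by a standard partition-by-pattern argument.

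First, recall that in any wqo every antichain is finite. I claim that for each $n$ the subset $\cM^{\circ}_n := \{[A] \in \cM^{\circ} : |A| = n\}$ is an antichain. Indeed, if $A, B \subset \Omega$ both have cardinality $n$ and $[A] \le [B]$, then $A \subset gB$ for some $g \in \Gamma$; but $|A| = |gB| = n$ forces $A = gB$, so $[A] = [B]$. Thus $|\cM^{\circ}_n| < \infty$, which is exactly the assertion that $\Gamma$ has only finitely many orbits on the $n$-element subsets of $\Omega$.

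Next, to pass from subsets to tuples, I would decompose $\Omega^n$ into the finitely many $\Gamma$-stable pieces $\Omega^n_{\sim}$ indexed by equivalence relations $\sim$ on $[n]$, where $\Omega^n_{\sim}$ consists of the tuples $(x_1, \ldots, x_n)$ with $x_i = x_j$ iff $i \sim j$. If $\sim$ has $k$ blocks, then labeling the blocks by $[k]$ identifies $\Omega^n_{\sim}$ $\Gamma$-equivariantly with the set of injections $[k] \hookrightarrow \Omega$. The $\Gamma$-orbit of such an injection $f$ with image $A$ is determined by the $\Gamma$-orbit of $A$ together with the orbit of $f$ under the setwise stabilizer $\Gamma_{\{A\}}$ acting on the (finite, of size $k!$) set of bijections $[k] \to A$. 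The first step provides finitely many orbits of $k$-subsets, so each $\Omega^n_{\sim}$ has only finitely many $\Gamma$-orbits, and hence so does $\Omega^n$.

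I do not expect a real obstacle. Once the antichain observation is in place, the rest is combinatorial bookkeeping; the only point needing any care is the passage from unordered subsets to ordered tuples (including those with repetitions), and this is handled cleanly by the pattern decomposition.
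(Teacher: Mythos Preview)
Your proof is correct and follows essentially the same route as the paper: both hinge on the observation that the classes $[A]$ with $|A|=n$ form an antichain in $\cM^{\circ}$, so wqo forces finitely many orbits on $\Omega^{(n)}$. The paper argues by contrapositive and leaves the equivalence between finiteness on $\Omega^{(n)}$ and on $\Omega^n$ as an ``easily sees,'' whereas you argue directly and spell out the pattern decomposition for the passage to tuples.
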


\begin{proof}
Suppose $\Gamma$ is not oligomorphic. Let $\Omega^{(n)}$ denote the set of $n$-element subsets of $\Omega$. One easily sees that $\Gamma$ has infinitely many orbits on $\Omega^{(n)}$ for large enough $n$. Suppose that $A_1, A_2, \ldots$ are $n$-element subsets of $\Omega$ belonging to distinct orbits. Then $[A_i]$ and $[A_j]$ are incomparable in $\cM^{\circ}$ for all $i \ne j$. Indeed, if $[A_i] \le [A_j]$ then we would have $gA_i \subset A_j$ for some $g \in \Gamma$, and thus $gA_i=A_j$ since $A_i$ and $A_j$ have the same cardinality, contradicting that $A_i$ and $A_j$ belong to distinct orbits. We thus see that the $[A_i]$'s give an infinite antichain in $\cM^{\circ}$, and so $\cM^{\circ}$ is not wqo.
\end{proof}

The above propositions provide necessary conditions for $S$ to be $\Gamma$-noetherian, or even topologically $\Gamma$-noetherian: the action of $\Gamma$ must be oligomorphic, and $\cM^{\circ}$ must be wqo. The oligomorphic condition has been widely studied, especially in connection to model theory (see \cite{CameronBook, Macpherson}), and the wqo condition on $\cM^{\circ}$ has also been studied (see \cite{OudrarPouzet}). It is possible to prove similar necessary results in the setting from \S \ref{s:sys}. We now give some examples and further comments related to this.

\begin{example}
Consider the infinite symmetric group $\fS$ acting on $\Omega=\{1,2,\ldots\}$. The action of $\fS$ on $\Omega \times \Omega$ is oligomorphic. However, $\cM^{\circ}=\cM^{\circ}(\fS, \Omega \times \Omega)$ is not wqo. Indeed, define
\begin{displaymath}
A_n = \{ (1,2), (2,3), \ldots, (n-1,n), (n, 1) \}.
\end{displaymath}
This is an $n$-element subset of $\Omega \times \Omega$, and thus defines a class $[A_n]$ in $\cM^{\circ}$. One easily sees that these elements form an anti-chain, and so $\cM^{\circ}$ is not wqo. Thus, by Proposition~\ref{prop:necc}, we see that $S=k[x_{i,j}]_{i,j \ge 1}$ is not topologically $\fS$-noetherian. This is well-known; e.g., see \cite[Example~2.4]{DraismaNotes}.
\end{example}

\begin{example}
Let $\bF$ be a finite field, let $\Gamma=\bigcup_{n \ge 1} \GL_n(\bF)$ be the infinite general linear group, and let $\Omega=\bigcup_{n \ge 1} \bF^n$. As above, the action of $\Gamma$ on $\Omega$ is oligomorphic, but $\cM^{\circ}=\cM^{\circ}(\Gamma, \Omega)$ is not wqo. Indeed, let
\begin{displaymath}
A_n = \{e_1,e_2,\ldots,e_n,e_1+\cdots+e_n \},
\end{displaymath}
where $\{e_i\}$ is the standard basis of $\Omega$. Then $A_n$ is an $n+1$ element subset of $\Omega$ that is linearly dependent, but for which all proper subsets are linearly independent. It follows that $[A_n]$ is an antichain in $\cM^{\circ}$, and so $\cM^{\circ}$ is not wqo.  Thus, by Proposition~\ref{prop:necc}, we see that $S=k[x_v]_{v \in \Omega}$ is not topologically $\fS$-noetherian. We thank Jan Draisma for explaining this example to us.
\end{example}

\begin{remark}
Let $(\Gamma, \Omega)$ be an oligomorphic group. For a non-negative integer $n$, let $\phi(n)$ be the number of $\Gamma$-orbits on the set $\Omega^{(n)}$ of $n$-element subsets of $\Omega$. The function $\phi \colon \bN \to \bN$ is called the \defn{profile} of $(\Gamma, \Omega)$. In \cite[\S 6.1]{OudrarPouzet} the following question is posed: if $\cM^{\circ}$ is wqo, is it necessarily true that $\phi(n) \le c^n$ for some positive number $c$? As far as we know, this question is still open. A positive answer would imply that $S$ can only be $\Gamma$-noetherian when $\phi$ has exponential growth. We note that the profile functions in the previous two examples have super-exponential growth.
\end{remark}

\begin{question}
If $\cM$ is wqo then of course so is $\cM^{\circ}$, as the latter is a subset of the former. Does the converse hold? We know of no counterexample.
\end{question}

\subsection{Fra\"iss\'e theory} \label{ss:fraisse}

We have seen that $S$ can only be $\Gamma$-noetherian when $\Gamma$ is an oligomorphic permutation group. We now recall that primary way of constructing such groups, and then give some examples. We refer to \cite{CameronBook} for additional background.

A \defn{signature} $\Sigma$ is an alphabet $\{R_i\}_{i \in I}$ where each $R_i$ is assigned a positive integer $n_i$ called it arity. A \defn{structure} is a set $X$ equipped with an $n_i$-arity relation $R_i$ on $X$ (i.e., a subset of $X^{n_i}$) for each $i \in I$. There is a natural notion of embedding and isomorphism of structures. We are typically interested in classes of structures for some fixed signature; for example, if the signature consists of a single binary relation, we might consider partially ordered sets, equivalence relations, graphs, etc.

Fix $\Sigma$, and let $\fC$ be a class of finite structures. We say that $\fC$ is a \defn{Fra\"iss\'e class} if it satisfies the following conditions:
\begin{itemize}
\item $\fC$ is non-empty and has countably cardinality.
\item If $X$ belongs to $\fC$ and $Y$ embeds into $X$ then $Y$ belongs to $\fC$.
\item Suppose we have embeddings $i_1 \colon Y \to X_1$ and $i_2 \colon Y \to X_2$ of structures in $\fC$. Then there exists a structure $Z$ in $\fC$ with embedings $j_1 \colon X_1 \to Z$ and $j_2 \colon X_2 \to Z$ such that $j_1 \circ i_1 = j_2 \circ i_2$ and $Z=\im(j_1) \cup \im(j_2)$.
\end{itemize}
The third condition is called the \defn{amalgamation property} for $\fC$, and $Z$ is called an \defn{amalgamation} of $X_1$ and $X_2$ over $Y$. Note that the second condition implies that the empty structure $\emptyset$ belongs to $\fC$. If $X_1$ and $X_2$ are arbitrary structures in $\fC$ then, taking $Y=\emptyset$ in the amalgamation property, we find that there is a structure $Z$ in $\fC$ into which both $X_1$ and $X_2$ embed. This is called the \defn{joint embedding property} and sometimes stated separately from the amalgamation property.

A structure $\Omega$ is called \defn{homogeneous} if whenever $i$ and $j$ are embeddings of a finite structure $X$ into $\Omega$, there is an automorphism $\sigma$ of $\Omega$ such that $j=\sigma \circ i$. Fra\"iss\'e proved that if $\fC$ is a Fra\"iss\'e class then there is a countable homogeneous structure $\Omega$ such that $\fC$ is exactly the \defn{age} of $\Omega$, i.e., the class of finite structures that embed into $\Omega$. Such an $\Omega$ is called the \defn{Fra\"iss\'e limit} of $\Omega$, and is unique up to isomorphism. Let $\Gamma=\Aut(\Omega)$. By homogeneity, the $\Gamma$-orbits on the set $\Omega^{(n)}$ of $n$-element subsets of $\Omega$ are in bijection with isomorphism classes of $n$-element structures in $\fC$. Thus, if there are finitely many such structures (which is automatic if the signature $\Sigma$ is finite) then $\Gamma$ acts oligomorphically on $\Omega$. In this way, Fra\"iss\'e limits are the primary way of constructing oligomorphic groups.

We note that the category $\cC$ we associated to $(\Gamma, \Omega)$ in \S \ref{ss:equimain} is equivalent to the category whose objects are the structures in $\fC$, and whose morphisms are embedings of structures. The poset $\cM^{\circ}$ is identified with the set of isomorphism classes in $\fC$, ordered by embedability.

\begin{example}
Let $\fC$ be the class of all finite simple graphs; here the signature consists of a single binary relation which records if an edge is present between two vertices. This is a Fra\"iss\'e class. The limit $R$ is the well-known \defn{Rado graph} or \defn{random graph}. Its automorphism group $\Gamma$ acts oligomorphically on the vertex set $V(R)$ or $R$. The poset $\cM^{\circ}$ is not a wqo: one obtained an anti-chain by considering $n$-cycles for $n \ge 1$. Thus $S=k[x_i]_{i \in V(R)}$ is not $\Gamma$-noetherian, or even topologically $G$-noetherian.
\end{example}

\begin{example}
Fix a positive integer $c$. Let $\fC$ be the class of finite totally ordered sets equipped with a $c$-coloring; here the signature has one binary relation to record the order, and $c$ unary relations to record the coloring. This is a Fra\"iss\'e class. The limit can be described as the totally ordered set $\bQ$ equipped with a $c$-coloring $\sigma \colon \bQ \to [c]$ such that each color is dense; the uniqueness of the Fra\"iss\'e limit shows that such a structure is independent of the choice of coloring, up to isomorphism.

Let $\Gamma$ be the automorphism group of $(\bQ, <, \sigma)$, which acts oligomorphically on $\bQ$. The category $\cC$ is easily seen to be a $G$-category: the poset $\cM$ is the set of words in the alphabet $\bN \times [c]$, under the Higman order. Thus $R$ is noetherian by Theorem~\ref{thm:noethsys}, and so $S=k[x_i]_{i \in \bQ}$ is $\Gamma$-noetherian by Theorem~\ref{thm:RtoS}. We note that in this case, $\cM^{\circ}$ is the set of words in the finite alphabet $[c]$ with the Higman order. This oligomorphic group was recently studied in \cite{colored}.
\end{example}

\begin{example}
Recall the notion of permutation class discussed in \S \ref{ss:perm}. A theorem of Cameron \cite{CameronPerm} asserts that there are just five permutation classes that are Fra\"iss\'e classes. Thus most of the examples coming from \S \ref{ss:perm} will not have an associated oligomorphic group. For instance, there is no such group associated to the class of separable permutations.
\end{example}

\section{Boron trees} \label{s:boron}

A \defn{boron tree} is a finite tree whose internal vertices have valence three; see Figure~\ref{fig:boron}. The internal vertices are called ``boron atoms'' and the leaves ``hydrogen atoms.'' Let $T$ be a boron tree. We obtain a quaternary relation $\rho$ on the set $T_H$ of hydrogen atoms by declaring $\rho(w,x;y,z)$ to be true if the geodesic joining $(w,x)$ intersects the geodesic joining $(y,z)$. It is really the structure $(T_H, \rho)$ that we can about, though the tree can be recovered from it.

We let $\cC$ be the category whose objects are boron trees and whose morphisms are embeddings, meaning maps of hydrogen atoms preserving the quaternary relation. This is an $\FI$-concrete category via $\vert T \vert=T_H$. We let $\fC$ be the collection of all finite structures $(T_H, R)$ coming from boron trees. This is a Fra\"iss\'e class \cite[Exercise 2.6.4]{CameronBook}. We let $\Omega$ be the Fra\"iss\'e limit, and we let $\Gamma$ be its automorphism group. The following is our main theorem in this case:

\begin{theorem} \label{thm:boron}
The system $R=R(\cC)$ of finite polynomial rings is noetherian, and the polynomial ring $S=k[x_i]_{i \in \Omega}$ is $\Gamma$-noetherian.
\end{theorem}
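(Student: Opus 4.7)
The strategy is to exhibit a G-category $\cD$ together with a concrete, essentially surjective functor $\cD \to \cC$. Then $\cC$ is a QG-category, so Theorem~\ref{thm:noethsys} gives noetherianity of $R = R(\cC)$; combined with the equivalence of $\cC$ with $\cC(\Gamma, \Omega)$ noted at the end of \S\ref{ss:fraisse} and Theorem~\ref{thm:RtoS}, this also yields that $S$ is $\Gamma$-noetherian.

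For $\cD$, I would take the category of \emph{planted planar boron trees}. An object is a triple $(T, r, P)$, where $T$ is a boron tree, $r \in T_H$ is a distinguished root, and $P$ is a planar structure on $T$ (a cyclic ordering of the three edges at each internal vertex). A morphism $(T, r, P) \to (T', r', P')$ is a $\cC$-morphism $\phi \colon T \to T'$ with $\phi(r) = r'$ that is compatible with the planar structure induced on the subtree spanned by $\phi(T_H)$. The functor $|T| = T_H$ makes $\cD$ an $\FI$-concrete category, and the forgetful functor $\cD \to \cC$ is essentially surjective because every boron tree admits some choice of root and planar embedding. Orderability is immediate: rooting at $r$ and using $P$ to orient each internal vertex yields a depth-first left-to-right traversal, giving a total order on $T_H$ that is preserved by $\cD$-morphisms.

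The main obstacle is verifying that $\cM(\cD)$ is wqo. A weighted object $(T, r, P, \alpha)$ of $\cD$ encodes as a pair $(\alpha(r), t)$, where $t$ is a term in the free algebra over the signature consisting of $\bN$-labeled constants $\{c_n\}_{n \in \bN}$ (the leaves of $T$ other than $r$, labeled by $\alpha$) and a single binary operation $\ast$ (the internal vertices, with left/right children determined by $P$). Under this encoding, the partial order on $\cM(\cD)$ corresponds exactly to homeomorphic embedding of labeled planar rooted binary trees, with leaf labels compared by $\le$ in $\bN$ and roots mapped to roots. Kruskal's tree theorem in its labeled planar form, applied to the wqo $\bN$, then shows this poset is wqo; taking the product with $\bN$ (for the root label) preserves wqo. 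Thus $\cM(\cD)$ is wqo, $\cD$ is a G-category, and Theorem~\ref{thm:noethsys} combined with Proposition~\ref{prop:F} gives that $R$ is noetherian, completing the proof via Theorem~\ref{thm:RtoS}.
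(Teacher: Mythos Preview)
Your proposal is correct and follows essentially the same route as the paper: both introduce the auxiliary category $\cD$ of rooted planar (``ordered'') boron trees, observe that it is orderable via the induced total order on leaves, and use the forgetful functor $\cD \to \cC$ to conclude that $\cC$ is a QG-category, after which Theorems~\ref{thm:noethsys} and~\ref{thm:RtoS} finish the job. The only substantive difference is in verifying that $\cM(\cD)$ is wqo: the paper gives a self-contained Nash--Williams minimal bad sequence argument (splitting each tree at the boron atom adjacent to the root into its left and right pieces), whereas you encode weighted objects as $\bN$-labeled planar binary terms and invoke Kruskal's tree theorem; since the paper's argument is precisely the proof of Kruskal specialized to this setting, the two are really the same proof at different levels of abstraction.

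One point you assert but do not justify, and which is worth a sentence of care, is that the order on $\cM(\cD)$ coincides with homeomorphic embedding of the associated terms. This comes down to checking that a root- and order-preserving embedding of boron trees (i.e., a leaf injection under which the induced quartet structure matches) is the same data as a homeomorphic embedding of the corresponding binary terms. The forward direction extends the leaf map to internal vertices by sending each to the least common ancestor of its image leaves; the backward direction uses that in your signature constants can only match constants, so leaves go to leaves, and the order-preserving injection on children forces $e(\mathrm{LCA}(l_1,l_2)) = \mathrm{LCA}(e(l_1),e(l_2))$ in a binary tree. This is standard, but since it is the hinge of your appeal to Kruskal it should be stated explicitly.
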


We prove Theorem~\ref{thm:boron} by showing that $\cC$ is QG-category. Since boron trees can have non-trivial automorphisms, $\cC$ cannot be a G-category. We must therefore introduce an auxiliary category where the objects have more structure, to kill the automorphisms.

A \defn{planar boron tree} is one that is embedded in the unit circle, meaning the hydrogen atoms are placed on the circle and the boron atoms and edges are internal to the circle, and there are no crossings. The hydrogen atoms of a planar boron tree carry a natural cyclic order, where $y$ is said to be between $x$ and $z$ if one meets $y$ when traveling from $x$ to $z$ counterclockwise (and $x$, $y$, and $z$ are distinct). An \defn{ordered boron tree} is a planar boron tree with a distinguished hydrogen atom, labeled $\infty$ and called the root. In such a tree, the non-root hydrogen atoms are totally ordered, with $x<y$ if $x$ is between $\infty$ and $y$; we also define $x<\infty$ for all $x \ne \infty$. See \cite[\S 3-4]{trees} for additional details.

Let $\cD$ be the category whose objects are ordered boron trees, and whose morphisms are embeddings of boron trees that preserve the root and the total order on $T_H$. This is an $\FI$-concrete category via $\vert T_H \vert=T_H$. The category $\cD$ carries a canonical ordering.

\begin{lemma} \label{lem:orderedBoronWQO}
The set $\cM=\cM(\cD)$ is wqo.
\end{lemma}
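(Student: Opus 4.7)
The plan is to identify weighted ordered boron trees with leaf-weighted rooted planar binary trees and then establish the wqo property via a Nash-Williams minimal bad sequence argument.

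\emph{Step 1 (reformulation).} Given an ordered boron tree $T$ with $|T_H| \ge 2$, the root $\infty$ is adjacent to a unique boron atom $v$; deleting $\infty$ makes $v$ the root of a rooted planar binary tree $\beta(T)$ in which every internal vertex has exactly two ordered children, and whose leaves correspond in order to $T_H \setminus \{\infty\}$. A weighting $\alpha$ on $T$ is equivalent to a weight $\alpha(\infty) \in \bN$ together with an $\bN$-labeling of the leaves of $\beta(T)$. Handling the case $|T_H|=1$ via a formal empty binary tree, this yields a bijection $\cM \leftrightarrow \bN \times \cB$, where $\cB$ is the set of isomorphism classes of $\bN$-leaf-labeled rooted planar binary trees (including $\emptyset$).

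\emph{Step 2 (translation of the order).} I then unravel the order on $\cM$ into a recursive order on $\cB$: writing $B = (B_L, B_R)$ and $B' = (B'_L, B'_R)$, we have $B \le B'$ iff one of (i) $B \le B'_L$, (ii) $B \le B'_R$, or (iii) $B_L \le B'_L$ and $B_R \le B'_R$. For a single leaf $\ell_w$, $\ell_w \le B'$ iff $B'$ contains a leaf of weight $\ge w$, and $\emptyset \le B'$ always holds. The three cases correspond to whether the selected non-$\infty$ leaves of $T'$ land entirely on the left of, entirely on the right of, or on both sides of the top split in $\beta(T')$; in cases (i) and (ii) the boron atom adjacent to $\infty'$ becomes degree two in the spanned subgraph and is suppressed upon passing to the induced boron tree. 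Once verified, this promotes the bijection above to a poset isomorphism $\cM \cong \bN \times \cB$.

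\emph{Step 3 (wqo of $\cB$).} The heart of the proof is showing $\cB$ is wqo. Assuming otherwise, I build a Nash-Williams minimal bad sequence $B_1, B_2, \ldots$, choosing each $B_i$ with the fewest leaves compatible with a bad continuation. If infinitely many $B_i$ are single leaves or $\emptyset$, their labels in $\bN$ supply a good pair, a contradiction; so we may assume each $B_i = (B_i^L, B_i^R)$. The set $\cS = \{B_i^L, B_i^R : i \ge 1\}$ is then wqo, since any bad sequence in $\cS$ could be spliced back into the minimal sequence at a position with a strictly smaller tree, contradicting minimality. By Dickson's lemma $\cS \times \cS$ is wqo, so there exist $i < j$ with $B_i^L \le B_j^L$ and $B_i^R \le B_j^R$, and clause (iii) forces $B_i \le B_j$, the desired contradiction. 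A final application of Dickson's lemma to $\bN \times \cB$ yields that $\cM$ is wqo.

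The main obstacle I anticipate is the translation step: verifying that clauses (i)--(iii) exhaust the embeddings of ordered boron trees, since (i) and (ii) are the non-obvious ones that involve the structural collapse of $v'$ described above. The minimal bad sequence argument itself is classical, but the boundary cases ($\emptyset$ and single-leaf trees) and the bookkeeping of the left/right distinction in the planar structure require care.
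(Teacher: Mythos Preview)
Your proposal is correct and follows essentially the same approach as the paper: a Nash--Williams minimal bad sequence argument in which each tree is split at the root into its left and right pieces, the collection of pieces is shown to be wqo by minimality, and Dickson's lemma on pairs produces a good pair via your clause~(iii). The only cosmetic difference is that the paper keeps the root $\infty$ (with its weight) in both halves $T_{i,1}$ and $T_{i,2}$ rather than stripping it off into a separate $\bN$-factor as you do, which lets them avoid your final Dickson step and your explicit recursive description of the order on~$\cB$.
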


\begin{proof}
Since trees are finite, this is clearly well founded. If $\cM$ is not well-quasi-ordered, take a bad sequence of isomorphism classes of trees $(T_1,\alpha_1),(T_2,\alpha_2),\dots$ minimal with respect to $|T_i|$.  Since the tree with a single vertex or two vertices embeds into any tree with sufficiently high labels, each $T_i$ has at least three leaves.

Every leaf is therefore connected to a boron atom that has two additional descendants; we refer to them as the left and right descendants of the leaf, using our planar embedding. Define $T_{i,1}$ (respectively, $T_{i,2}$) by taking the root of $T_i$ together with all the hydrogen atoms whose geodesic to the root contains the left (respectively, right) descendant of the root with the induced boron tree structure from $T_i$, and put $\alpha_{i,j} = \alpha_i|_{T_{i,j}}$. By construction both $(T_{i,1},\alpha_{i,1})$ and $(T_{i,2},\alpha_{i,2})$ embed into $(T_i,\alpha_i)$, so by minimality the sub-poset of $\cM$, $\{(T_{i,j},\alpha_{i,j}) \; | \; i \in \bN, 1 \leq j \leq 2\}$, is well-quasi-ordered.

Since each rooted planar boron tree $T_i$ has at least three vertices, $T_{i,1}$ and $T_{i,2}$ are non-empty. This means we can replace $(T_i,\alpha_i)$ in our bad sequence with $((T_{i,1},\alpha_{i,1}),(T_{i,2},\alpha_{i,2}))$. By Dickson's lemma, there is an infinite good subchain,
\[
((T_{i_1,1},\alpha_{i_1,1}),(T_{i_1,2},\alpha_{i_1,2})) \leq ((T_{i_2,1},\alpha_{i_2,1}),(T_{i_2,2},\alpha_{i_2,2})) \leq \cdots
\]
in the component-wise order. Since every leaf in $T_{i,1}$ is smaller in our total order than the leaves in $T_{i,2}$, we find $(T_{i_j},\alpha_{i_j}) \leq (T_{i_k},\alpha_{i_k})$, contradicting badness.
\end{proof}

See \cite{NashWilliams} for details on minimal bad sequences. It is an often used fact---though not always clearly stated---that the subposet of substructures of a minimal bad sequence is well-quasi-ordered. This is the essence of the first paragraph of \cite[Proof of Theorem~1]{NashWilliams}, where $B$ is precisely this subposet. For an application of this theory to Kruskal's tree theorem (with pictures), we refer the reader to \cite[Theorem 1.2]{DraismaNotes}.

\begin{theorem} \label{thm: orderedGrobner}
The category $\cD$ is a G-category.
\end{theorem}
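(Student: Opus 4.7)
The plan is to simply verify the two defining conditions of a G-category for $\cD$, since both have essentially been established already in the preceding discussion. Recall that a G-category must be (a) orderable and (b) have $\cM(\cC)$ a well-quasi-order.

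For orderability, I would invoke the canonical ordering on $\cD$ already described in the paragraph introducing $\cD$: given an ordered boron tree $T$, put the total order on $\vert T \vert = T_H$ defined by declaring $x<y$ if $x$ is between $\infty$ and $y$ in the cyclic order on the planar embedding (with $x<\infty$ for all $x \neq \infty$). Since morphisms in $\cD$ are by definition embeddings of boron trees preserving the root $\infty$ and the total order on hydrogen atoms, every morphism in $\cD$ lifts to a monotone injection in $\OI$. Thus $\cD$ is orderable.

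For the wqo condition, this is exactly the content of Lemma~\ref{lem:orderedBoronWQO}, which asserts that $\cM(\cD)$ is wqo.

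Combining these two observations, $\cD$ satisfies both clauses of the definition, so $\cD$ is a G-category. The main (and really only) conceptual work is the minimal bad sequence argument already carried out in Lemma~\ref{lem:orderedBoronWQO}; everything else is an unwinding of definitions, so I would keep the proof to essentially two sentences pointing at the canonical ordering and at the lemma.
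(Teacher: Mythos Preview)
Your proposal is correct and matches the paper's proof essentially verbatim: the paper simply notes that $\cD$ is orderable (by the canonical ordering already described) and then invokes Lemma~\ref{lem:orderedBoronWQO} for the wqo condition. Your slightly expanded version just unpacks these two pointers, but the content is identical.
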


\begin{proof}
We have already said that $\cD$ is orderable, so the result follows from Lemma~\ref{lem:orderedBoronWQO}.
\end{proof}

\begin{corollary}
The category $\cC$ is a QG-category.
\end{corollary}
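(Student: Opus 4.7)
The plan is to exhibit the obvious forgetful functor $\Phi \colon \cD \to \cC$ from ordered boron trees to boron trees (forgetting the root and the planar embedding, and keeping only the underlying quaternary relation on $T_H$), and to verify that it is concrete and essentially surjective. Since $\cD$ has already been shown to be a G-category in Theorem~\ref{thm: orderedGrobner}, this will establish that $\cC$ is a QG-category by definition.

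First I would check that $\Phi$ is a well-defined functor: a morphism in $\cD$ is an embedding of the underlying boron trees which, in addition, preserves the root and the total order on hydrogen atoms. Forgetting the extra conditions, such a morphism is in particular an embedding of boron trees, i.e.\ a morphism in $\cC$. The underlying sets agree on the nose (both are given by $T_H$), so the identity provides the required natural isomorphism $\vert T\vert\to\vert\Phi(T)\vert$, making $\Phi$ a concrete functor.

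Next, I would verify essential surjectivity. Given an arbitrary boron tree $T$ in $\cC$, any finite tree is planar, so we may equip $T$ with a planar embedding; then choosing any leaf of $T$ and declaring it to be $\infty$ turns $T$ into an ordered boron tree $T'$ in $\cD$. By construction $\Phi(T')$ has the same underlying quaternary relation on hydrogen atoms as $T$, hence $\Phi(T')\cong T$ in $\cC$. Thus $\Phi$ hits every isomorphism class.

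There is no real obstacle here beyond the work already done: the substance of the argument was the well-quasi-order statement of Lemma~\ref{lem:orderedBoronWQO} and its packaging into Theorem~\ref{thm: orderedGrobner}. Having an ordering on $\cC$ itself is hopeless since boron trees carry non-trivial automorphisms, which is precisely why the auxiliary category $\cD$ was introduced, and why the QG-notion (as opposed to G-notion) is the right one to apply.
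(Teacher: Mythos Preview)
Your proposal is correct and takes essentially the same approach as the paper, which simply says the result is witnessed by the forgetful functor $\cD \to \cC$. You have spelled out the verification that this functor is concrete and essentially surjective, which the paper leaves implicit.
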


\begin{proof}
This is witnessed by the forgetful functor $\cD \to \cC$.
\end{proof}

We thus see that $R(\cC)$ is noetherian (Theorem~\ref{thm:noethsys}), and so $S$ is $\Gamma$-noetherian (Theorem~\ref{thm:RtoS}). This completes the proof of Theorem~\ref{thm:boron}.

\begin{remark}
Before we had the general methods of this paper, we attempted to prove Theorem~\ref{thm:boron} by hand, and completely failed. We could not even give a direct proof that $S$ is topologically $\Gamma$-noetherian, which one might expect to be a much easier result. It would be interesting to find a direct argument.
\end{remark}


\begin{thebibliography}{LNNR2}

\bibitem[AD]{AD} Lamprini Ananiadi and Eliana Duarte. Gr\"obner bases for staged trees. \textit{Alg.\ Stat.} \textbf{12} (2021), pp.~1--20. \DOI{10.2140/astat.2021.12.1} \arxiv{1910.02721}

\bibitem[AH]{AschenbrennerHillar} Matthias Aschenbrenner, Christopher J.~Hillar. Finite generation of symmetric ideals. \textit{Trans.\ Amer.\ Math.\ Soc.}, \textbf{359} (2007), 5171--5192; erratum, ibid.\ \textbf{361} (2009), pp.~5627--5627. \DOI{10.1090/S0002-9947-07-04116-5} \arxiv{math/0411514}

\bibitem[BD]{BD} A.~E.~Brouwer, J.~Draisma. Equivariant Gr\"obner bases and the Gaussian two-factor model. \textit{Math.\ Comp.} \textbf{80} (2011), no.~274, pp.~1123--1133. \DOI{10.1090/S0025-5718-2010-02415-9} \arxiv{0908.1530}

\bibitem[BDES]{polygeom} Arthur Bik, Jan Draisma, Rob H.~Eggermont, Andrew Snowden. The geometry of polynomial representations. \textit{Int.\ Math.\ Res.\ Not.\ IMRN}, to appear. \arxiv{2105.12621}

\bibitem[BV]{BV} Robert Brignall, Vincent Vatter. Labelled well-quasi-order for permutation classes. \textit{Comb.\ Theory}~\textbf{2} (2022), no.~3. \DOI{10.5070/C62359178} \arxiv{2103.08243}

\bibitem[Cam1]{CameronBook} Peter J.\ Cameron. Oligomorphic permutation groups. London Mathematical Society Lecture Note Series, vol. 152, Cambridge University Press, Cambridge, 1990.

\bibitem[Cam2]{trees} Peter J.\ Cameron. Some treelike objects. \textit{Q.\ J.\ Math.} \textbf{38} (1987), no.~38, pp.~155--183. \\ \DOI{10.1093/qmath/38.2.155}

\bibitem[Cam3]{CameronPerm} Peter J.\ Cameron. Homogeneous Permutations. \textit{Electron.\ J.\ Combin.} \textbf{9} (2002), no.~3. \\ \DOI{10.37236/1674}

\bibitem[Coh1]{cohen} D.~E. Cohen. On the laws of a metabelian variety. \textit{J.\ Algebra} \textbf{5} (1967), pp.~267--273. \DOI{10.1016/0021-8693(67)90039-7}

\bibitem[Coh2]{cohen2} D.~E.~Cohen. Closure relations, Buchberger's algorithm, and polynomials in infinitely many variables. In \textit{Computation theory and logic}, volume 270 of \textit{Lect.\ Notes Comput.\ Sci.}, pp.~78--87, 1987. \DOI{10.1007/3-540-18170-9\_156}

\bibitem[Dra1]{DraismaNotes} Jan Draisma. Noetherianity up to symmetry. \textit{Combinatorial algebraic geometry}, Lecture Notes in Math.\ \textbf{2108}, Springer, 2014. \DOI{10.1007/978-3-319-04870-3\_2} \arxiv{1310.1705v2}

\bibitem[Dra2]{DraismaNoeth} Jan Draisma. Topological Noetherianity of polynomial functors. \textit{J.\ Amer.\ Math.\ Soc.}\ \textbf{32}(3) (2019), pp.\ 691--707. \DOI{10.1090/jams/923} \arxiv{1705.01419}

\bibitem[DE]{DraismaEggermont} Jan Draisma, Rob H.~Eggermont. Pl\"ucker varieties and higher secants of Sato's Grassmannian. \textit{J.\ Reine Angew.\ Math.} \textbf{737} (2018), pp.~189--215. \DOI{10.1515/crelle-2015-0035} \arxiv{1402.1667v2}

\bibitem[DK]{DraismaKuttler} Jan Draisma, Jochen Kuttler. Bounded-rank tensors are defined in bounded degree. \textit{Duke Math.~J.} \textbf{163} (2014), no.~1, pp.~35--63. \DOI{10.1215/00127094-2405170} \arxiv{1103.5336v2}

\bibitem[GMN]{GMN} Christiane G\"rgen, Aida Maraj, Lisa Nicklasson. Staged tree models with toric structure. \textit{J.\ Symbolic Comput.}, to appear. \arxiv{2107.04516}

\bibitem[HC]{HillarCampo} Christopher J.~Hillar, Abraham Martin del Campo. Finiteness theorems and algorithms for permutation invariant chains of Laurent lattice ideals. \textit{J.\ Symbolic Comput.} \textbf{50} (2013), pp.~314--334. \DOI{10.1016/j.jsc.2012.06.006} \arxiv{1110.0785}

\bibitem[HaSn1]{repst} Nate Harman, Andrew Snowden. Oligomorphic groups and tensor categories. \arxiv{2204.04526}

\bibitem[HaSn2]{homoten} Nate Harman, Andrew Snowden. Ultrahomogeneous tensor spaces. \arxiv{2207.09626}

\bibitem[HaSn3]{bcat} Nate Harman, Andrew Snowden. Pre-Galois categories and Fra\"iss\'e's theorem. \arxiv{2301.13784}

\bibitem[HiSu]{HillarSullivant} Christopher J.~Hillar, Seth Sullivant. Finite Gr\"obner bases in infinite dimensional polynomial rings and applications. \textit{Adv.\ Math.}, \textbf{229} (2012), no.~1, pp.~1--25. \DOI{10.1016/j.aim.2011.08.009} \arxiv{0908.1777}

\bibitem[JKVLR]{JKVLR} Martina Juhnke-Kubitzke, Dinh Van Le, Tim R\"omer. Asymptotic behavior of symmetric ideals: a brief survey. In: D.~Stamate, T.~Szemberg (eds) ``Combinatorial structures in algebra and geometry.'' Springer Proceedings in Mathematics \& Statistics, vol 331, 2018. \\ \DOI{10.1007/978-3-030-52111-0\_7} \arxiv{2009.03617}

\bibitem[Kru]{wqoHistory} Joseph Kruskal. The theory of well-quasi-ordering: a frequently discovered concept. \textit{J.\ Combin.\ Theory Ser.\ A} \textbf{13} (1972), no.~3, pp.~297--305. \DOI{10.1016/0097-3165(72)90063-5}


\bibitem[KLS]{KLS} Robert Krone, Anton Leykin and Andrew Snowden. Hilbert series of symmetric ideals in infinite polynomial rings via formal languages. \textit{J.\ Algebra} \textbf{485} (2017), pp.~353--362. \DOI{10.1016/j.jalgebra.2017.05.014} \arxiv{1606.07956}

\bibitem[LNNR1]{LNNR} Dinh Van Le, Uwe Nagel, Hop D.~Nguyen, Tim Roemer. Castelnuovo--Mumford regularity up to symmetry. \textit{Int.\ Math.\ Res.\ Not.\ IMRN} \textbf{2021}, no.~14, pp.~11010–11049. \DOI{10.1093/imrn/rnz382} \arxiv{1806.00457}

\bibitem[LNNR2]{LNNR2} Dinh Van Le, Uwe Nagel, Hop D.~Nguyen, Tim Roemer. Codimension and projective dimension up to symmetry. \textit{Math.\ Nachr.} \textbf{293} (2020), pp.~346--362. \DOI{10.1002/mana.201800413} \arxiv{1809.06877}

\bibitem[Mac]{Macpherson} Dugald Macpherson. A survey of homogeneous structures. \textit{Discrete Math.} \textbf{311} (2011), no.~15, pp.~1599--1634. \DOI{10.1016/j.disc.2011.01.024}

\bibitem[MN]{MarajNagel} Aida Maraj, Uwe Nagel. Equivariant Hilbert series for hierarchical models. \textit{Algebr.\ Stat.} \textbf{12} (2021), no.~1, pp.~21--42. \DOI{10.2140/astat.2021.12.21} \arxiv{1909.13026}

\bibitem[NW]{NashWilliams} Crispin St. J.A. Nash-Williams. On well-quasi-ordering finite trees. \textit{Proc. Camb. Philos. Soc.}, \textbf{59} (1963), pp.~833--835.

\bibitem[NR1]{NagelRomer1} Uwe Nagel, Tim R\"omer. Equivariant Hilbert series in non-Noetherian Polynomial Rings. \textit{J.\ Algebra} \textbf{486} (2017), pp.~204--245. \DOI{10.1016/j.jalgebra.2017.05.011} \arxiv{1510.02757}

\bibitem[NR2]{NagelRomer2} Uwe Nagel, Tim R\"omer. FI- and OI-modules with varying coefficients. \textit{J.\ Algebra} \textbf{535} (2019), pp.~286--322. \DOI{10.1016/j.jalgebra.2019.06.029} \arxiv{1710.09247}

\bibitem[Nek]{Nekrasov} Ilia Nekrasov. Smooth representations of oligomorphic groups. In preparation.

\bibitem[NS1]{semilin} Rohit Nagpal, Andrew Snowden. The semi-linear representation theory of the infinite symmetric group. \arxiv{1909.08753}

\bibitem[NS2]{svar} Rohit Nagpal, Andrew Snowden. Symmetric subvarieties of infinite affine space. \arxiv{2011.09009}

\bibitem[NS3]{sideals} Rohit Nagpal, Andrew Snowden. Symmetric ideals of the infinite polynomial ring. \arxiv{2107.13027}

\bibitem[NSS]{sym2noeth} Rohit Nagpal, Steven V Sam, Andrew Snowden. Noetherianity of some degree two twisted commutative algebras. \textit{Selecta Math. (N.S.)} \textbf{22} (2016), no.~2, pp.~913--937. \DOI{10.1007/s00029-019-0461-3} \arxiv{1501.06925v2}

\bibitem[OP]{OudrarPouzet} Djamila Oudrar, Maurice Pouzet. Profile and hereditary classes of ordered relational structures. \textit{J.\ Mult.-Valued Logic Soft Comput.} \textbf{27} (2015), no.~5--6, pp.~475--500. \arxiv{1409.1108}

\bibitem[Ram]{Ramos} Eric Ramos. An application of the theory of FI-algebras to graph configuration spaces. \textit{Math.\ Z.} \textbf{294} (2020), pp.~1--15. \DOI{10.1007/s00209-019-02278-w} \arxiv{1805.05316}

\bibitem[Sno]{colored} Andrew Snowden. Measures for the colored circle. \arxiv{2302.08699}

\bibitem[SS1]{symc1} Steven V Sam, Andrew Snowden. GL-equivariant modules over polynomial rings in infinitely many variables. \textit{Trans.\ Amer.\ Math.\ Soc.} \textbf{368} (2016), pp.~1097--1158. \DOI{10.1090/tran/6355} \arxiv{1206.2233}

\bibitem[SS1]{infrank} Steven~V Sam, Andrew Snowden. Stability patterns in representation theory. \textit{Forum Math.\ Sigma} \textbf{3} (2015), e11, 108 pp. \DOI{10.1017/fms.2015.10} \arxiv{1302.5859v2}

\bibitem[SS2]{catgb} Steven~V Sam, Andrew Snowden. Gr\"obner methods for representations of combinatorial categories. \textit{J.\ Amer.\ Math.\ Soc.} \textbf{30} (2017), pp.~159--203. \DOI{10.1090/jams/859} \arxiv{1409.1670v3}

\bibitem[SS3]{symu1} Steven~V Sam, Andrew Snowden. GL-equivariant modules over polynomial rings in infinitely many variables. II. \textit{Forum Math.\ Sigma} \textbf{7} (2019), e5, 71 pp. \DOI{10.1017/fms.2018.27} \arxiv{1703.04516v1}

\bibitem[SA]{SA} Jim Q.~Smith, Paul E.~Anderson. Conditional independence and chain event graphs. \textit{Artificial Intelligence} \textbf{172} (2008), no.~1, pp.~42--68. \DOI{10.1016/j.artint.2007.05.004}

\end{thebibliography}
\end{document}